\documentclass[12pt,a4paper]{article}
\usepackage[a4paper, tmargin=3.4cm, bmargin=3.4cm, lmargin=2cm, rmargin=2cm, textheight=24cm, textwidth=16cm]{geometry}
\usepackage{setspace}
\usepackage{amsfonts}
\usepackage{tikz}
\usepackage{epsfig}
\usepackage{latexsym}
\usepackage{amsthm}
\usepackage{dsfont}
\usepackage{hyperref}
\usepackage{xcolor}
\hypersetup{
	colorlinks,
	linkcolor={red!50!black},
	citecolor={blue!50!black},
	urlcolor={blue!80!black}
}
\usepackage{amsmath}
\usepackage{amssymb}

\usepackage[mathscr]{euscript}
 \let\mathscr\relax
\usepackage[scr]{rsfso}

\usepackage{array}
\usepackage{enumerate}
\usepackage{graphicx}
\usepackage{booktabs}
\usepackage{mathtools}
\usepackage{amsmath,blkarray,booktabs}

\setcounter{MaxMatrixCols}{30}
\newtheorem{theorem}{Theorem}[section]
\newtheorem{corollary}{Corollary}[section]

\newtheorem{lemma}{Lemma}[section]

\newtheorem{nota}{Notation}[section]
\newtheorem{example}{Example}[section]

\providecommand{\keywords}[1]
{
	\small	
	\textbf{Keywords:} #1
}
\begin{document}

	\title{A new generalization of Fielder's lemma with applications
	}
	
	\author{
		Komal Kumari and Pratima Panigrahi
		\\ \small Department of Mathematics, Indian Institute of Technology Kharagpur, India\\ \small e-mail: komalkumari1223w@kgpian.iitkgp.ac.in, pratima@maths.iitkgp.ac.in}
	
	
	\maketitle
	\begin{abstract}
		Very recently Ma and Wu \cite{wu2024generalization} obtained a generalization of Fielder's lemma and applied to find adjacency, Laplacian, and signless Laplacian spectra of $P_n-$ product of commuting graphs.  In this paper, we give a generalization of Fielder's  lemma applying which not only one gets generalized  result in  \cite{wu2024generalization} as a particular case, but also one can find several kind of spectra of $H$-product of graphs when $H$ is an arbitrary graph. Moreover, we  compute adjacency spectrum of $H-$ product  of commuting graphs and universal adjacency spectrum of $H-$ product of  commuting regular graphs.\\
	\end{abstract}
	
	\keywords{ Fiedler’s lemma; $H-$ product of graphs; commuting graphs; adjacency spectrum; universal adjacency spectrum.}\\
	\textbf{Mathematical Classification Code :}{ 05C50; 15A18}

	\section{Introduction}
	 We consider simple graph $G = (V(G),E(G))$ where $V(G)$ and $E(G)$  represent the sets of vertices and   edges of $G$ respectively. If $u$ and $v$ are end vertices of an edge, then we denote the edge by $uv$.  For any $u \in V(G)$,
	 the degree of $u$, denoted by
	$d_G(u)$, is  the number of neighbors of $u$ in $G$. If every vertex of $G$ has the same degree 
	 $r$, then $G$ is referred as an $r$-regular graph.\\
	 
	  Let $H$ be a graph with $V(H)= \{1,2,\ldots,l\}$, and let $H_1,H_2,\ldots,H_l$ be  graphs of the same order $n$ with $V(H_j)= \{v_{j1},v_{j2},\ldots,v_{jn}\}$, $j = 1,2,\ldots,l$. The $H-$product of $H_1,H_2,\ldots,H_l$, introduced by Howlader \cite{howlader2022distance}, denoted as $\widehat{G} = {\Lambda}_{H} (H_1,H_2, \ldots,H_l),$ is a graph with  $V(\widehat{G})= \bigcup_{j=1} ^{l}V(H_j) $ and $E(\widehat{G})= (\underset{j=1}{\overset{l}{\bigcup} }  E(H_j) )~ \bigcup~ (\underset{j,k \in E(H)}{\bigcup} \{v_{ji}v_{ki}: i = 1,2,\ldots,n\})$. For example, consider the graphs $H, H_1,H_2,H_3,$ and $H_4$ shown in  Figure \ref{fig:path5}, then the graph ${\Lambda}_{H} (H_1,H_2,H_3,H_4)$ can be represented in Figure \ref{fig:path6}.
	  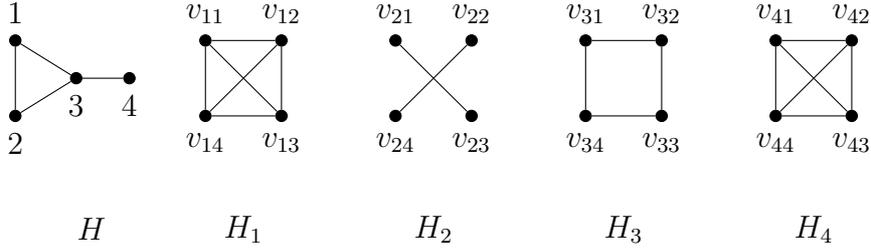
\begin{figure}[h]
	  	\centering
	  	\begin{tikzpicture}
	  		\node[draw, circle, fill=black, inner sep=1.5pt, label=above:$1$] (a1) at (0,0) {};
	  		\node[draw, circle, fill=black, inner sep=1.5pt, label=below:$2$] (a2) at (0,-1) {};
	  		\node[draw, circle, fill=black, inner sep=1.5pt, label=below:$3$] (a3) at (0.8,-0.5) {};
	  		\node[draw, circle, fill=black, inner sep=1.5pt, label=below:$4$] (a4) at (1.5,-0.5) {};
	  		\draw (a1) -- (a2);
	  		\draw (a2) -- (a3);
	  		\draw (a3) -- (a1);
	  		\draw (a4) -- (a3);
	  		\node at (1,-2.5) {$H$}; 

	  		\node[draw, circle, fill=black, inner sep=1.5pt, label=above:$v_{11}$] (v1) at (2.5,0) {};
	  		\node[draw, circle, fill=black, inner sep=1.5pt, label=above:$v_{12}$] (v2) at (3.5,0) {};
	  		\node[draw, circle, fill=black, inner sep=1.5pt, label=below:$v_{13}$] (v3) at (3.5,-1) {};
	  		\node[draw, circle, fill=black, inner sep=1.5pt, label=below:$v_{14}$] (v4) at (2.5,-1) {};
	  		
	  		\draw (v1) -- (v2);
	  		\draw (v2) -- (v3);
	  		\draw (v3) -- (v4);
	  		\draw (v4) -- (v1);
	  		\draw (v1) -- (v3);
	  		\draw (v2) -- (v4);
	  		\node at (3,-2.5) {$H_1$}; 
	  		\node[draw, circle, fill=black, inner sep=1.5pt, label=above:$v_{21}$] (u1) at (5,0) {};
	  		\node[draw, circle, fill=black, inner sep=1.5pt, label=above:$v_{22}$] (u2) at (6,0) {};
	  		\node[draw, circle, fill=black, inner sep=1.5pt, label=below:$v_{23}$] (u3) at (6,-1) {};
	  		\node[draw, circle, fill=black, inner sep=1.5pt, label=below:$v_{24}$] (u4) at (5,-1) {};
	  		
	  		\draw (u1) -- (u3);
	  		\draw (u2) -- (u4);
	  		\node at (5.5,-2.5) {$H_2$}; 
	  		
	  		\node[draw, circle, fill=black, inner sep=1.5pt, label=above:$v_{31}$] (t1) at (7.5,0) {};
	  		\node[draw, circle, fill=black, inner sep=1.5pt, label=above:$v_{32}$] (t2) at (8.5,0) {};
	  		\node[draw, circle, fill=black, inner sep=1.5pt, label=below:$v_{33}$] (t3) at (8.5,-1) {};
	  		\node[draw, circle, fill=black, inner sep=1.5pt, label=below:$v_{34}$] (t4) at (7.5,-1) {};
	  		
	  		\draw (t1) -- (t2);
	  		\draw (t2) -- (t3);
	  		\draw (t3) -- (t4);
	  		\draw (t4) -- (t1);
	  		\node at (8,-2.5) {$H_3$};

	  		\node[draw, circle, fill=black, inner sep=1.5pt, label=above:$v_{41}$] (w1) at (10,0) {};
	  		\node[draw, circle, fill=black, inner sep=1.5pt, label=above:$v_{42}$] (w2) at (11,0) {};
	  		\node[draw, circle, fill=black, inner sep=1.5pt, label=below:$v_{43}$] (w3) at (11,-1) {};
	  		\node[draw, circle, fill=black, inner sep=1.5pt, label=below:$v_{44}$] (w4) at (10,-1) {};
	  		
	  		\draw (w1) -- (w2);
	  		\draw (w2) -- (w3);
	  		\draw (w3) -- (w4);
	  		\draw (w4) -- (w1);
	  		\draw (w1) -- (w3);
	  		\draw (w2) -- (w4);
	  		\node at (10.5,-2.5) {$H_4$};

	  	\end{tikzpicture}
	  	\caption{ The graph  \(H, H_1, H_2, H_3 ,H_4\).}
	  	\label{fig:path5}
	  \end{figure} 
	  
	  \begin{figure}[h]
	  	\centering
	  	\begin{tikzpicture}		
	  		
	  		\node[draw, circle, fill=black, inner sep=1.5pt, label=above:$v_{11}$] (v1) at (0,0) {};
	  		\node[draw, circle, fill=black, inner sep=1.5pt, label=above:$v_{12}$] (v2) at (1,0) {};
	  		\node[draw, circle, fill=black, inner sep=1.5pt, label=below:$v_{13}$] (v3) at (1,-1) {};
	  		\node[draw, circle, fill=black, inner sep=1.5pt, label=below:$v_{14}$] (v4) at (0,-1) {};
	  		
	  		
	  		\node[draw, circle, fill=black, inner sep=1.5pt, label=above:$v_{21}$] (u1) at (0.8,-2.8) {};
	  		\node[draw, circle, fill=black, inner sep=1.5pt, label=above:$v_{22}$] (u2) at (1.8,-2.8) {};
	  		\node[draw, circle, fill=black, inner sep=1.5pt, label=below:$v_{23}$] (u3) at (1.8,-3.8) {};
	  		\node[draw, circle, fill=black, inner sep=1.5pt, label=below:$v_{24}$] (u4) at (0.8,-3.8) {};
	  		

	  		\node[draw, circle, fill=black, inner sep=1.5pt, label=above:$v_{31}$] (t1) at (3,-1.5) {};
	  		\node[draw, circle, fill=black, inner sep=1.5pt, label=above:$v_{32}$] (t2) at (4,-1.5) {};
	  		\node[draw, circle, fill=black, inner sep=1.5pt, label=below:$v_{33}$] (t3) at (4,-2.5) {};
	  		\node[draw, circle, fill=black, inner sep=1.5pt, label=below:$v_{34}$] (t4) at (3,-2.5) {};

	  		\node[draw, circle, fill=black, inner sep=1.5pt, label=above:$v_{41}$] (w1) at (6,0) {};
	  		\node[draw, circle, fill=black, inner sep=1.5pt, label=above:$v_{42}$] (w2) at (7,0) {};
	  		\node[draw, circle, fill=black, inner sep=1.5pt, label=below:$v_{43}$] (w3) at (7,-1) {};
	  		\node[draw, circle, fill=black, inner sep=1.5pt, label=below:$v_{44}$] (w4) at (6,-1) {};
	  		\draw (v1) -- (v2);
	  		\draw (v2) -- (v3);
	  		\draw (v3) -- (v4);
	  		\draw (v4) -- (v1);
	  		\draw (v1) -- (v3);
	  		\draw (v2) -- (v4);
	  		\draw (u1) -- (u3);
	  		\draw (u2) -- (u4);
	  		\draw (t1) -- (t2);
	  		\draw (t2) -- (t3);
	  		\draw (t3) -- (t4);
	  		\draw (t4) -- (t1);
	  		\draw (w1) -- (w2);
	  		\draw (w2) -- (w3);
	  		\draw (w3) -- (w4);
	  		\draw (w4) -- (w1);
	  		\draw (w1) -- (w3);
	  		\draw (w2) -- (w4);
	  		\draw (v1) -- (u1);
	  		\draw (v2) -- (u2);
	  		\draw (v3) -- (u3);
	  		\draw (v4) -- (u4);		
	  		\draw (t1) -- (u1);
	  		\draw (t2) -- (u2);
	  		\draw (t3) -- (u3);
	  		\draw (t4) -- (u4);
	  		\draw (t1) -- (w1);
	  		\draw (t2) -- (w2);
	  		\draw (t3) -- (w3);
	  		\draw (t4) -- (w4);
	  		\draw (t1) -- (v1);
	  		\draw (t2) -- (v2);
	  		\draw (t3) -- (v3);
	  		\draw (t4) -- (v4);
	  		
	  	\end{tikzpicture}
	  	\caption{ The graph  \({\Lambda}_{H} (H_1,H_2, \ldots,H_l)\)}
	  	\label{fig:path6}
	  \end{figure}
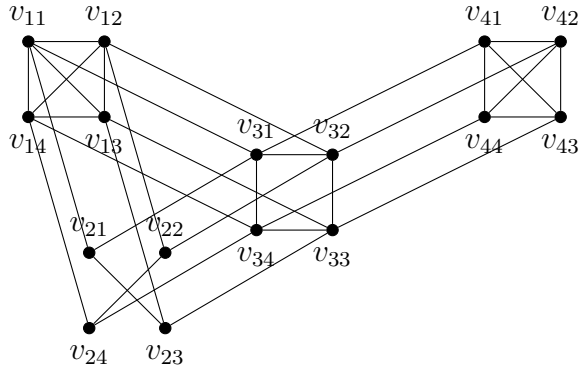
	   We recall that the  $H-$ join of graphs $H_1, H_2, \ldots, H_l$, is denoted by $H[H_1,H_2, \ldots, H_l]$, is constructed from $H_1 \cup H_2 \cup \ldots \cup H_l$ by connecting each vertex of $H_i$ to every vertex of $H_j$ whenever $ij \in E(H)$.
	  The concept of an  almost equitable $l$-partition of the vertices of a graph $G$, introduced by Cardoso et al. \cite{cardoso2007laplacian}, is a partition $ \Pi = (V_1,V_2,\ldots,V_l)$ of $V(H)$, where  every vertex in $V_i$ has exactly $d_{ij}$ neighbors in $V_{j}$ for each $j \neq i$.  \\
	 
	 The adjacency matrix of a graph $G$ with $n$ vertices, denoted by $A(G)$, is a $0-1$ matrix of size  $n \times n$, wherein the entry  $a_{ij} = 1$
	 if the  $i^{th}$ vertex is adjacent to the $j^{th}$ vertex, and  $0$ otherwise.  For  $k\geq2$, graphs $G_1,G_2,\ldots, G_k$ of the same order,  are called commuting graphs if their adjacency matrices commute pairwise. The diagonal matrix whose $i^{th}$ diagonal entry is the degree of $i^{th}$ vertex in $G$, is denoted by $ D(G)$. Additionally, $I$, $J$, and  $O$ represent the identity matrix, the all-ones matrix, and the zero matrix, respectively. For  $\alpha,\beta,\gamma,\eta \in \mathbb{R}$, the universal adjacency matrix of  graph 
	 $G$ is defined as 
	 $U(G)=\alpha A(G)+ \beta D(G)+ \gamma I+\eta J$. By substituting specific values for 
	$(\alpha,\beta,\gamma,\eta) = (1,0,0,0), (-1,1,0,0),(1,1,0,0)$, and $(-2,0,-1,1)$, one can obtain respectively the adjacency matrix $A(G)$, the Laplacian matrix $L(G)$, the signless Laplacian matrix $Q(G)$, and the Seidel matrix of the graph. For an 	$n \times n$ real symmetric matrix $M$ with distinct eigenvalues  $ \lambda_1,  \lambda_2,  \ldots, \lambda_{s}$   and their corresponding  multiplicities $k_1,k_2,\ldots,k_s$, the  spectrum of $M$ is represented as Spec$(M)$ 
	 $= \{ {\lambda_1}^{(k_1)}, {\lambda_2}^{(k_2)}, \ldots, {\lambda_s}^{(k_s)} \}$. If  $\lambda$ is an eigenvalue of $M$ with corresponding eigenvector  $\textbf{u}$, then the  pair $(\lambda, \textbf{u})$ is known as an eigenpair of $M$. If $(\lambda_i, \textbf{u}_i)$, $i = 1,2,\ldots,k$ are eigenpairs of $M$ such that $\{\textbf{u}_1, \textbf{u}_2, \ldots, \textbf{u}_k\}$ is an orthonormal set, then  $\{(\lambda_i, \textbf{u}_i): i = 1,2,\ldots,k \}$ is called a set of orthonormal eigenpairs. \\

      In 1974, Fielder presented the following lemma.
		\begin{lemma}\cite{fiedler1974eigenvalues} [  Fielder's Lemma]\label{lemma1.1}
		Let $A$ and $B$ be symmetric matrices of sizes $m$ and $n$, respectively, with  eigenpairs $(\alpha_i,u_i),$ $ i = 1,2,\ldots, m $ and $(\beta_i,v_i),$ $i = 1,2,\ldots,n$, respectively. Assume that $\|u_1\|= 1=\|v_1\|.$ Then, for any $\rho$, the matrix 
		\begin{equation*}
			C=	\begin{pmatrix}
				A & \rho u_1 v_1 ^T\\
				\rho v_1 u_1 ^T& B 
			\end{pmatrix}
		\end{equation*}
		has eigenvalues $\alpha_2,\alpha_3,\ldots,\alpha_m,\beta_2,\beta_3,\ldots,\beta_n,\gamma_1,\gamma_2$, where  $\gamma_1,\gamma_2$ are the eigenvalues of the matrix 
		\begin{equation*}
			\hat{C}= \begin{pmatrix}
				\alpha_1 & \rho\\
				\rho& \beta_1
			\end{pmatrix}.
		\end{equation*}
	\end{lemma}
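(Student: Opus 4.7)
The plan is to exhibit an explicit orthonormal eigenbasis of $C$ built from the eigenvectors of $A$, $B$, and $\hat{C}$, and count to $m+n$ eigenvalues. Since $A$ and $B$ are symmetric, I will take $\{u_1,\ldots,u_m\}$ and $\{v_1,\ldots,v_n\}$ to be orthonormal eigenbases.

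First I would handle the ``trivial'' block eigenvectors. For each $i\geq 2$, I form the block vector $\binom{u_i}{0}\in\mathbb{R}^{m+n}$ and compute
\begin{equation*}
C\binom{u_i}{0}=\binom{A u_i}{\rho v_1 u_1^T u_i}=\binom{\alpha_i u_i}{0},
\end{equation*}
using orthogonality $u_1^T u_i=0$. Thus $\alpha_i$ is an eigenvalue of $C$ for $i=2,\ldots,m$. Symmetrically, for each $j\geq 2$, the vector $\binom{0}{v_j}$ satisfies $C\binom{0}{v_j}=\binom{0}{\beta_j v_j}$ via $v_1^T v_j=0$, producing the eigenvalues $\beta_2,\ldots,\beta_n$. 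This accounts for $m+n-2$ eigenpairs.

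Next I would handle the two missing eigenvalues by lifting eigenvectors of $\hat{C}$. If $\hat{C}\binom{a}{b}=\gamma\binom{a}{b}$, I propose the block vector $\binom{a u_1}{b v_1}$ and compute
\begin{equation*}
C\binom{a u_1}{b v_1}=\binom{a\alpha_1 u_1+b\rho(v_1^T v_1)u_1}{a\rho(u_1^T u_1)v_1+b\beta_1 v_1}=\binom{(a\alpha_1+b\rho)u_1}{(a\rho+b\beta_1)v_1}=\gamma\binom{a u_1}{b v_1},
\end{equation*}
where I used $\|u_1\|=\|v_1\|=1$ and the eigenvalue equation for $\hat{C}$. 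Applying this to both eigenpairs $(\gamma_1,\cdot)$ and $(\gamma_2,\cdot)$ of the $2\times 2$ symmetric matrix $\hat{C}$ yields two more eigenvalues of $C$.

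Finally I would verify that the full list of $m+n$ eigenvectors produced is linearly independent (equivalently, that we really have exhausted the spectrum). Orthogonality is immediate: the lifted vectors $\binom{au_1}{bv_1}$ are orthogonal to every $\binom{u_i}{0}$ with $i\geq 2$ (since $u_1\perp u_i$) and to every $\binom{0}{v_j}$ with $j\geq 2$ (since $v_1\perp v_j$); the blocks $\binom{u_i}{0}$ and $\binom{0}{v_j}$ are orthogonal to each other trivially, and within each block they are orthogonal by construction. The two lifted vectors are orthogonal to each other because $\hat{C}$ is symmetric and one may choose its eigenvectors orthogonal (in the case $\gamma_1=\gamma_2$, the $2$-dimensional lift still yields an orthogonal pair). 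Having $m+n$ mutually orthogonal, hence linearly independent, eigenvectors in an $(m+n)$-dimensional space forces the listed multiset to be the complete spectrum of $C$. No single step is a genuine obstacle; the only care needed is in the orthogonality argument for the lifted pair when $\hat{C}$ has a repeated eigenvalue, which is handled by choosing an orthonormal eigenbasis of $\hat{C}$ from the start.
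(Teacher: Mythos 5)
Your proof is correct and follows essentially the same strategy the paper uses for its generalization (Theorem \ref{theorem2}): the lemma itself is only cited, but the paper's proof of the general case constructs exactly your two families of eigenvectors, namely the lifted vectors $\bigl(\begin{smallmatrix}a u_1\\ b v_1\end{smallmatrix}\bigr)$ coming from eigenvectors of $\hat{C}$ and the block vectors $\bigl(\begin{smallmatrix}u_i\\ 0\end{smallmatrix}\bigr)$, $\bigl(\begin{smallmatrix}0\\ v_j\end{smallmatrix}\bigr)$ for $i,j\geq 2$. Your closing orthogonality/completeness count is in fact slightly more careful than the paper, which omits that verification.
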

 Robbiano et al. \cite{robbiano2010extending} extended Fielder's lemma and utilized it to determine the eigenspaces of specific graphs.
 Cardoso et al. \cite{cardoso2011generalization} extended Fielder's lemma where they considered more than two block diagonal square matrices and then applied the result to determine the eigenvalues of $P_n-$ join of regular graphs, where $P_n$ is the path on $n$ vertices. For an arbitrary graph $H$, Cardoso et al. \cite{cardoso2013spectra} derived the adjacency spectrum for $H-$join of  regular graphs, as well as the Laplacian spectra for $H-$join  of arbitrary graphs.  Wu et al. \cite{wu2014signless} provided the signless Laplacian and normalized Laplacian spectra for $H-$join of regular graphs.  Andrade et al. \cite{andrade2017spectra} applied Fielder's lemma to examine the Randić spectra of $H-$join of regular graphs.
Most  recently, Ma and Wu \cite{wu2024generalization} presented a result, as given in Theorem \ref{theorem1} below, which is a  generalization of the generalized result of Fielder's lemma in Robbiano et al. \cite{robbiano2010extending}. Then the same authors applied their result and  determined adjacency, Laplacian, and signless Laplacian spectra of $P_n$- product of commuting graphs. 

\begin{theorem}\label{theorem1}\cite{wu2024generalization}
	For $j = 1,2,\ldots,n,$ let $A_j$ be $m_j \times m_j$ symmetric matrices, with corresponding eigenpairs $(\alpha_{i,j},u_{i,j}),$ $i = 1,2,\ldots,m_j$.  Let $ k \leq  min\{m_1,m_2,\ldots,m_n\},$ and $U_j = (u_{1,j}|u_{2,j}|\ldots|u_{k,j})$, $j= 1,2,\ldots,n.$ Furthermore, suppose that for each $j$, the system of eigenvectors $u_{i,j}$, $i = 1,2,\ldots,m_j,$  is orthonormal. Then, for $\rho_1,\rho_2,\ldots,\rho_{n-1},$ the matrix
	
	\begin{equation*}
		C= \begin{pmatrix}
			A_1 & \rho_{1} U_1 U_2 ^T&  &  &\\
			 \rho_{1} U_2 U_1 ^T & A_2&\rho_{2}U_2U_3 ^T & & \\
			   &\ddots&\ddots&\ddots& \\
			   & & \rho_{n-2}U_{n-1}U_{n-2} ^T&A_{n-1}&\rho_{n-1}U_{n-1} U_{n} ^T\\
			    & & & \rho_{n-1}U_n U_{n-1}&A_n		
		\end{pmatrix}
	\end{equation*}
has eigenvalues $ \lambda_{k+1,1}, \lambda_{k+2,1}, \ldots,\lambda_{m_1,1},\lambda_{k+1,2},\lambda_{k+1,2},\ldots,\lambda_{m_2,2},\ldots,\lambda_{k+1,n},\lambda_{k+2,n},\ldots,\lambda_{m_n,n},$ $\xi_{1,t},$ $\xi_{2,t},$ $\ldots,\xi_{n,t},$ where $\xi_{1,t},\xi_{2,t},\ldots,\xi_{n,t}$, $t = 1,2,\ldots,k$ are eigenvalues of the matrix
\begin{equation*}
\hat{C}_t = 	\begin{pmatrix}
          \lambda_{t,1} & \rho_{1} &  &  &\\
             \rho_{1} & \lambda_{t,2}&\rho_{2} & & \\
                       &\ddots&\ddots&\ddots& \\
              & & \rho_{n-2}&\lambda_{t,n-1}&\rho_{n-2}\\
           &  & & \rho_{n-1}&\lambda_{t,n}
       \end{pmatrix}.      
  \end{equation*}
\end{theorem}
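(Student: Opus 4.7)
My approach is to exhibit an explicit orthogonal decomposition of $\mathbb{R}^{m_1+\cdots+m_n}$ into $C$-invariant subspaces on which the restriction of $C$ is either scalar or a copy of one of the tridiagonal matrices $\hat{C}_t$. The key observation driving every step is that, within each block, the orthonormality of $\{u_{i,j}\}_{i=1}^{m_j}$ gives $U_j^\top u_{i,j}=e_i$ when $1\le i\le k$ and $U_j^\top u_{i,j}=0$ when $i\ge k+1$, where $e_t$ denotes the $t$-th standard basis vector of $\mathbb{R}^k$.

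First I would handle the ``high-index'' eigenpairs. For each $j\in\{1,\ldots,n\}$ and each $i\in\{k+1,\ldots,m_j\}$, let $w_{i,j}$ be the block vector carrying $u_{i,j}$ in the $j$-th block and zero in every other block. Reading $Cw_{i,j}$ block by block, the diagonal block contributes $A_j u_{i,j}=\lambda_{i,j}u_{i,j}$, while the only other candidate blocks involve $\rho_{j-1}U_{j-1}U_j^\top u_{i,j}$ and $\rho_j U_{j+1}U_j^\top u_{i,j}$, both vanishing because $U_j^\top u_{i,j}=0$. Hence $Cw_{i,j}=\lambda_{i,j}w_{i,j}$, producing $\sum_{j=1}^n(m_j-k)$ eigenpairs of the asserted form.

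For each $t\in\{1,\ldots,k\}$, I would then introduce the $n$-dimensional subspace $W_t$ spanned by the block vectors $y_{t,j}$, $j=1,\ldots,n$, where $y_{t,j}$ has $u_{t,j}$ in block $j$ and zero elsewhere. A direct block computation, using $U_j^\top u_{t,j}=e_t$ and $U_{j\pm1}e_t = u_{t,j\pm1}$, yields
\begin{equation*}
Cy_{t,j}=\rho_{j-1}\,y_{t,j-1}+\lambda_{t,j}\,y_{t,j}+\rho_j\,y_{t,j+1},
\end{equation*}
with the convention $\rho_0=\rho_n=0$. Thus $W_t$ is $C$-invariant and the matrix of $C|_{W_t}$ in the orthonormal basis $(y_{t,1},\ldots,y_{t,n})$ is exactly $\hat{C}_t$, so its spectrum contributes the $n$ eigenvalues $\xi_{1,t},\ldots,\xi_{n,t}$.

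Because the $u_{i,j}$ are orthonormal within each block, the subspaces $W_1,\ldots,W_k$ together with all $\mathrm{span}(w_{i,j})$ for $i\ge k+1$ are pairwise orthogonal, and their dimensions add to $nk+\sum_{j=1}^n(m_j-k)=\sum_{j=1}^n m_j$, the full order of $C$. Hence the listed eigenvalues exhaust $\mathrm{Spec}(C)$. I expect the only genuinely delicate point to be checking the off-diagonal action that produces the displayed tridiagonal recursion: one must track both the $(j-1,j)$ and $(j+1,j)$ blocks of $C$ and confirm that the two transfers $U_j^\top u_{t,j}\mapsto e_t\mapsto u_{t,j\pm1}$ insert exactly the factors $\rho_{j-1}$ and $\rho_j$ in the right positions. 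After that the argument is essentially bookkeeping.
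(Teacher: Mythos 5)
Your proposal is correct and takes essentially the same approach as the paper (which proves this statement only via its generalization, Theorem \ref{theorem2}): both arguments rest on the identities $U_j^T u_{t,j}=e_t$ for $t\le k$ and $U_j^T u_{i,j}=0$ for $i\ge k+1$, your invariant-subspace formulation being a repackaging of the paper's explicit construction of eigenvectors $(w_{1st}u_{t,1},\ldots,w_{nst}u_{t,n})^T$ from eigenvectors of $\hat{C}_t$. The only substantive difference is that you make the pairwise orthogonality and dimension count explicit to justify that the listed eigenvalues exhaust the spectrum, a point the paper leaves implicit.
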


In this paper, we generalize Theorem \ref{theorem1} stated above. Then applying our generalized result, we find adjacency spectrum of $H$-product of commuting graphs and  universal adjacency spectrum of commuting regular graphs, for any arbitrary simple graph $H$. It is to be noted that by applying result of Ma and Wu \cite{wu2024generalization}, one cannot find Seidel spectrum of, even,   $P_n$-product of graphs. 

\section{Generalization of Theorem \ref{theorem1}}
 
 In this section, we further extend  Fielder's lemma and Theorem \ref{theorem1}  as given below. 

\begin{theorem}\label{theorem2}
	 For $j = 1,2,\ldots,n,$ let $A_j$ be real symmetric matrices of size $m_j \times m_j$, with corresponding eigenpairs $(\lambda_{i,j},u_{i,j})$, $i = 1,2,\ldots,m_j$. Let $ k \leq  min\{m_1,m_2,\ldots,m_n\},$ and $U_j = (u_{1,j}|u_{2,j}|\ldots|u_{k,j})$, $j= 1,2,\ldots,n.$ Furthermore, suppose that for each $j$, the set of eigenvectors $u_{i,j}$, $i = 1,2,\ldots,m_j,$  is orthonormal.  Let $\rho_{i,j}$, $1 \leq i \neq j \leq k,$ be  real numbers such that $\rho_{i,j} = \rho_{j,i}$.  Then, the eigenvalues of the matrix 
\begin{equation*}
	C= \begin{pmatrix}
		A_1 & \rho_{1,2} U_1 U_2 ^T&\ldots&\rho_{1,n}U_1U_n ^T\\
		\rho_{2,1} U_2 U_1 ^T & A_2&\ldots&\rho_{2,n}U_2U_n ^T\\
		\vdots&\vdots&\ddots&\vdots\\
		\rho_{n,1}U_nU_1 ^T&\rho_{n,2}U_nU_2 ^T&\ldots&A_n
	\end{pmatrix}
\end{equation*}
  are $ \lambda_{k+1,1}, \lambda_{k+2,1}, \ldots,\lambda_{m_1,1},\lambda_{k+1,2},\lambda_{k+1,2},\ldots,\lambda_{m_2,2},\ldots,$ $\lambda_{k+1,n}, \lambda_{k+2,n},\ldots,\lambda_{m_n,n},\xi_{1,t},\xi_{2,t},\ldots,\xi_{n,t},$ where $\xi_{1,t},\xi_{2,t},\ldots,\xi_{n,t}$, $t = 1,2,\ldots,k,$ are the eigenvalues of the matrix 
\begin{equation*}
	{C}_t= \begin{pmatrix}
		\lambda_{t,1} & \rho_{1,2}  &\ldots&\rho_{1,n}\\
		\rho_{2,1}  & \lambda_{t,2}&\ldots&\rho_{2,n}\\
		\vdots&\vdots&\ddots&\vdots\\
		\rho_{n,1}&\rho_{n,2}&\ldots&\lambda_{t,n}
	\end{pmatrix}.
\end{equation*}
\end{theorem}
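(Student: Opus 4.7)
The plan is to diagonalize $C$ by an orthogonal similarity and then apply a suitable permutation so that the conjugated matrix becomes block-diagonal with the advertised blocks. For each $j$, let $Q_j=(u_{1,j}\mid u_{2,j}\mid\cdots\mid u_{m_j,j})$ be the orthogonal matrix whose columns form the complete orthonormal eigenbasis of $A_j$, so that $Q_j^{T}A_jQ_j=\mathrm{diag}(\lambda_{1,j},\ldots,\lambda_{m_j,j})$. Set $P=\mathrm{diag}(Q_1,\ldots,Q_n)$, which is again orthogonal; the eigenvalues of $C$ coincide with those of $P^{T}CP$.

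Next I would compute $P^{T}CP$ block by block. The diagonal blocks are the diagonal matrices of eigenvalues just described. For the $(i,j)$ off-diagonal block with $i\neq j$, the key observation is that $U_i$ consists of the first $k$ columns of $Q_i$, so $Q_i^{T}U_i$ is the $m_i\times k$ matrix whose top $k\times k$ part is $I_k$ and whose remaining rows vanish. This gives
\begin{equation*}
Q_i^{T}\bigl(\rho_{i,j}\,U_iU_j^{T}\bigr)Q_j=\rho_{i,j}\begin{pmatrix}I_k & 0\\ 0 & 0\end{pmatrix},
\end{equation*}
an $m_i\times m_j$ matrix whose only nonzero entries are $\rho_{i,j}$ on the first $k$ diagonal positions.

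I would then apply a permutation similarity to $P^{T}CP$. Labelling indices by pairs $(j,s)$ with $1\le j\le n$ and $1\le s\le m_j$, I would first list, for each $t=1,2,\ldots,k$, the $n$ indices $(1,t),(2,t),\ldots,(n,t)$, and finally all remaining indices $(j,s)$ with $s>k$. After this permutation the matrix becomes block-diagonal: for each $t\le k$ one obtains an $n\times n$ block whose diagonal entries are $\lambda_{t,1},\ldots,\lambda_{t,n}$ and whose off-diagonal entries are the $\rho_{i,j}$, which is exactly $C_t$. The indices with $s>k$ contribute only the diagonal entries $\lambda_{s,j}$, because by the computation above the off-diagonal blocks of $P^{T}CP$ vanish in those positions.

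Reading off the spectrum of this block-diagonal form gives precisely the list in the statement: the $nk$ eigenvalues of $C_1,\ldots,C_k$ together with $\lambda_{s,j}$ for $s>k$ and $1\le j\le n$. The only nontrivial ingredients are the identity for $Q_i^{T}U_i$, which uses orthonormality of the full eigensystem of each $A_j$, and the verification that the chosen permutation really produces the announced block-diagonal form; both amount to careful bookkeeping. The hypothesis $\rho_{i,j}=\rho_{j,i}$ is used only to ensure each $C_t$ is symmetric (in parallel with $C$ itself), not for the spectral decomposition.
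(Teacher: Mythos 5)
Your proof is correct, but it takes a genuinely different route from the paper's. The paper argues eigenvector-by-eigenvector: for each eigenpair $(\xi_{s,t},\widehat{w_{s,t}})$ of $C_t$ it forms the stacked vector $(w_{1st}u_{t,1},\ldots,w_{nst}u_{t,n})^{T}$ and verifies directly, using $U_j^{T}u_{t,j}=e_t$ and $U_je_t=u_{t,j}$, that it is an eigenvector of $C$ with eigenvalue $\xi_{s,t}$; the remaining eigenvalues come from the padded vectors $(0,\ldots,u_{i,j},\ldots,0)^{T}$ with $i>k$, which are annihilated by the off-diagonal blocks since $U_j^{T}u_{i,j}=0$. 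You instead conjugate $C$ by $P=\mathrm{diag}(Q_1,\ldots,Q_n)$ and then permute to exhibit $C$ as orthogonally similar to $\bigl(\bigoplus_{t=1}^{k}C_t\bigr)\oplus\mathrm{diag}(\lambda_{s,j}:s>k,\ 1\le j\le n)$; your computation of $Q_i^{T}U_iU_j^{T}Q_j$ is right, and the permutation bookkeeping works because the off-diagonal blocks of $P^{T}CP$ are supported only on the first $k$ diagonal positions. Your version buys something the paper's does not make explicit: since it is a global similarity, the eigenvalue count with multiplicities is automatic, whereas the paper exhibits $\sum_j m_j$ eigenpairs and asserts the conclusion without explicitly noting that the constructed eigenvectors are linearly independent (which does follow from orthonormality). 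What the paper's version buys is explicit eigenvectors of $C$ itself, which is precisely what gets reused in the later applications (Theorems 3.1 and 4.1 repeat the same eigenvector construction for the adjacency and universal adjacency matrices). Both arguments rest on the same two facts about orthonormality, so the difference is one of packaging rather than substance.
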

\begin{proof}
For $s = 1,2,\ldots,n$, $t = 1,2,\ldots,k,$ let $(\xi_{s,t},\widehat{w_{s,t}})$ be eigenpairs of the matrix ${C}_t$, where $\widehat{w_{s,t}} = (w_{1st},w_{2st}, \ldots,w_{nst})^T.$ Then, we have 
\begin{equation*}
	{C}_t \widehat{{w}_{s,t}}= \begin{pmatrix}
	\lambda_{t,1} & \rho_{1,2} &\ldots&\rho_{1,n}\\
		\rho_{2,1}  &\lambda_{t,2}&\ldots&\rho_{2,n}\\
		\vdots&\vdots&\ddots&\vdots\\
		\rho_{n,1}&\rho_{n,2}&\ldots&\lambda_{t,n}
	\end{pmatrix} 
\begin{pmatrix}
	w_{1st}\\
	w_{2st}\\
	\vdots\\
	w_{nst}
\end{pmatrix}
\end{equation*}	

\begin{equation}\label{eq1}
=	\begin{pmatrix}
		w_{1st} \lambda_{t,1} + w_{2st} \rho_{1,2}+ \ldots+ w_{nst}\rho_{1,n}\\
		w_{1st}\rho_{2,1}+ w_{2st}\lambda_{t,2}+ \ldots+ w_{nst} \rho_{2,n}\\
		\vdots\\
			w_{1st}\rho_{n,1}+ w_{2st}\rho_{n,2}+ \ldots+  w_{nst}\lambda_{t,n}\\
			\end{pmatrix}
		=  \begin{pmatrix}
		\xi_{s,t}	w_{1st}\\
		\xi_{s,t}	w_{2st}\\
			\vdots\\
		\xi_{s,t}	w_{nst}
		\end{pmatrix}
	= \xi_{s,t} \begin{pmatrix}
		w_{1st}\\
		w_{2st}\\
		\vdots\\
		w_{nst}
	\end{pmatrix}
\end{equation}
Let $\begin{pmatrix}
		w_{1st} u_{t,1}\\
    	w_{2st} u_{t,2}\\
    	\vdots\\
    	w_{nst}u_{t,n}
\end{pmatrix}$ be a vector  of dimension $(m_1+m_2+\ldots+m_n)$. We show that this is an eigenvector of the matrix $C$. 
\begin{equation*}
\begin{pmatrix}
	A_1 & \rho_{1,2} U_1 U_2 ^T&\ldots&\rho_{1,n}U_1U_n ^T\\
	\rho_{2,1} U_2 U_1 ^T & A_2&\ldots&\rho_{2,n}U_2U_n ^T\\
	\vdots&\vdots&\ddots&\vdots\\
	\rho_{n,1}U_nU_1 ^T&\rho_{n2}U_nU_2 ^T&\ldots&A_n
\end{pmatrix}
\begin{pmatrix}
	w_{1st} u_{t,1}\\
	w_{2st} u_{t,2}\\
	\vdots\\
	w_{nst}u_{t,n}
\end{pmatrix} 
\end{equation*}

\begin{equation*}
=\begin{pmatrix}
	w_{1st}A_1 u_{t,1}+ w_{2st} \rho_{1,2} U_1U_2 ^Tu_{t,2}+ \ldots +  w_{nst} \rho_{1,n} U_1U_n ^Tu_{t,n}\\
	 w_{1st} \rho_{2,1} U_2U_1 ^Tu_{t,1}+	w_{2st}A_2 u_{t,2}+ \ldots +  w_{nst} \rho_{n,2} U_2U_n ^Tu_{t,n}\\
	 \vdots\\
	  w_{1st} \rho_{n,1} U_nU_1 ^Tu_{t,1}+	w_{2st} \rho_{n,2} U_nU_2 ^Tu_{t,2}+ \ldots +  w_{nst} A_n u_{t,n}\\
	\end{pmatrix}
\end{equation*}

Based on the definition  of $U_j = (u_{1,j}|u_{2,j}|\ldots|u_{k,j}),~j= 1,2,\ldots,n,$ the above matrix is 

\begin{equation*}
\begin{pmatrix}
	w_{1st}\lambda_{t,1} u_{t,1}+ w_{2st} \rho_{1,2} U_1e_t+ \ldots +  w_{nst} \rho_{1,n} U_1e_t\\
	w_{1st} \rho_{2,1} U_2e_t+	w_{2st} \lambda_{t,2} u_{t,2}+ \ldots +  w_{nst} \rho_{n,2} U_2e_t\\
	\vdots\\
	w_{1st} \rho_{n,1} U_ne_t+	w_{2st} \rho_{n,2} U_ne_t+ \ldots +  w_{nst} \lambda_{t,n} u_{t,n}\\
\end{pmatrix}
=
\begin{pmatrix}
	w_{1st}\lambda_{t,1} u_{t,1}+ w_{2st} \rho_{1,2} u_{t,1}+ \ldots +  w_{nst} \rho_{1,n} u_{t,1}\\
	w_{1st} \rho_{2,1}  u_{t,2}+	w_{2st} \lambda_{t,2} u_{t,2}+ \ldots +  w_{nst} \rho_{n,2} u_{t,2}\\
	\vdots\\
	w_{1st} \rho_{n,1} u_{t,n}+	w_{2st} \rho_{n,2}  u_{t,n}+ \ldots +  w_{nst} \lambda_{t,n} u_{t,n}\\
\end{pmatrix}
\end{equation*}

Then,  by Equation (\ref{eq1}), this column matrix is\\
 \begin{equation*}
\begin{pmatrix}
	(w_{1st}\lambda_{t,1}+ w_{2st} \rho_{1,2} + \ldots +  w_{nst} \rho_{1,n}) u_{t,1}\\
	(w_{1st} \rho_{2,1} +	w_{2st} \lambda_{t,2} + \ldots +  w_{nst} \rho_{n,2}) u_{t,2}\\
	\vdots\\
	(w_{1st} \rho_{n,1} +	w_{2st} \rho_{n,2}  + \ldots +  w_{nst} \lambda_{t,n}) u_{t,n}\\
\end{pmatrix}
= \begin{pmatrix}
	\xi_{s,t}	w_{1st}u_{t,1} \\
	\xi_{s,t}	w_{2st} u_{t,2}\\
	\vdots\\
	\xi_{s,t}	w_{nst}u_{t,n}
\end{pmatrix}
=  	\xi_{s,t} \begin{pmatrix}
		w_{1st}u_{t,1} \\
		w_{2st} u_{t,2}\\
	\vdots\\
	w_{nst}u_{t,n}
\end{pmatrix}
\end{equation*}

Thus, for $ s= 1,2,\ldots, n$ and $ t= 1,2, \ldots, k,$ the pairs $\bigg( \xi_{s,t}, \begin{pmatrix}
	w_{1st}u_{t,1}\\
	w_{2st}u_{t,2}\\
	\ldots \\
	w_{nst} u_{t,n}
\end{pmatrix} \bigg)$ represent $nk$ eigenpairs of the matrix $C$. Next, for $j = 1,2,\ldots,n,$ and the pairs $(\lambda_{i,j},u_{i,j})$ with  $i = k+1,k+2,\ldots,m_j,$ the following : 
\begin{equation*}
	\begin{pmatrix}
		A_1 & \rho_{1,2} U_1 U_2 ^T\ldots&\rho_{1,n}U_1U_n ^T\\
		\rho_{2,1} U_2 U_1 ^T & A_2\ldots&\rho_{2,n}U_2U_n ^T\\
		\vdots&\vdots&\ddots&\vdots\\
		\rho_{n,1}U_nU_1 ^T&\rho_{n,2}U_nU_2 ^T&\ldots&A_n
	\end{pmatrix}
\begin{pmatrix}
	0\\
	\vdots\\
	u_{i,j}\\
	\vdots\\
	0
\end{pmatrix}
= \begin{pmatrix}
	0\\
	\vdots\\
	A_j u_{i,j}\\
	\vdots\\
	0
\end{pmatrix}
= \alpha_{i,j}\begin{pmatrix}
	0\\
	\vdots\\
	u_{i,j}\\
	\vdots\\
	0
\end{pmatrix}
\end{equation*}

Therefore, for $j=1,2,\ldots,n$, $i = k+1,k+2,\ldots,m_j$, the pairs $(\lambda_{i,j},u_{i,j})$ are eigenpairs of $C$. Hence the result follows.
\end{proof}

\section{Spectra of $H$-product of commuting  graphs}
In this section, we apply Theorem \ref{theorem2}   and obtain spectra of $H$-product of commuting graphs.  Akbari et al. \cite{akbari2007commuting, akbari2009commutativity} discussed several properties and provided examples of commuting graphs. The lemma below provides a characterization of commuting matrices in terms of their eigenvectors.
\begin{lemma}\label{lemma1} \cite{heinze2001applications}
	Let $A_1,A_2,\ldots,A_n$ be symmetric matrices of order $k$. Then the following are equivalent.
	\begin{enumerate}
		\item $A_i A_j =A_j A_i$,  for any $i,j \in \{1,2,\ldots n\}$.
		\item There exists an orthonormal basis $\{x_1,x_2,\ldots,x_k\}$ of $\mathbb{R}^k$ such that $x_1,x_2,\ldots,x_k$ are eigenvectors of $A_i,$ $i = 1,2,\ldots,n.$
	\end{enumerate} 
	
\end{lemma}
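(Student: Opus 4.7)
The plan is to prove the two directions separately, with the reverse implication (2) $\Rightarrow$ (1) being essentially a one-line computation and the forward direction (1) $\Rightarrow$ (2) handled by induction on the number of matrices $n$, leveraging the spectral theorem for a single symmetric matrix.

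For (2) $\Rightarrow$ (1), I would let $Q$ be the orthogonal matrix whose columns are the common orthonormal eigenbasis $\{x_1,\ldots,x_k\}$. Then each $A_i$ admits a spectral decomposition $A_i = Q D_i Q^T$ with $D_i$ diagonal. Since diagonal matrices commute pairwise, $A_i A_j = Q D_i D_j Q^T = Q D_j D_i Q^T = A_j A_i$, which is exactly the required commutativity.

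For (1) $\Rightarrow$ (2), I would argue by induction on $n$. The base case $n=1$ is just the spectral theorem for a single real symmetric matrix, which yields an orthonormal eigenbasis of $\mathbb{R}^k$. For the inductive step, assume the statement holds for any family of $n-1$ pairwise commuting symmetric matrices of any order. Apply the spectral theorem to $A_n$ to decompose $\mathbb{R}^k = \bigoplus_{\mu} E_\mu$ as an orthogonal direct sum of the (distinct) eigenspaces of $A_n$. The key observation is that for each $i < n$, the subspace $E_\mu$ is $A_i$-invariant: indeed, if $x \in E_\mu$ then $A_n(A_i x) = A_i(A_n x) = \mu A_i x$, using $A_iA_n = A_nA_i$, so $A_ix \in E_\mu$. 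Since each $A_i$ is symmetric on $\mathbb{R}^k$ and preserves $E_\mu$, its restriction $A_i|_{E_\mu}$ is a symmetric operator on the Euclidean space $E_\mu$, and the restrictions $A_1|_{E_\mu},\ldots,A_{n-1}|_{E_\mu}$ still commute pairwise. By the induction hypothesis applied inside $E_\mu$, there exists an orthonormal basis of $E_\mu$ consisting of common eigenvectors of $A_1,\ldots,A_{n-1}$; such vectors automatically lie in $E_\mu$ and hence are eigenvectors of $A_n$ with eigenvalue $\mu$. Concatenating these bases across all eigenspaces $E_\mu$ of $A_n$ gives an orthonormal basis of $\mathbb{R}^k$ of common eigenvectors of $A_1,\ldots,A_n$.

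The main obstacle, and the step worth stating carefully, is the invariance claim together with the justification that $A_i|_{E_\mu}$ is a genuinely symmetric operator on $E_\mu$ with the inherited inner product, because the induction hypothesis requires symmetry of the restricted matrices. This follows from the fact that for $x,y \in E_\mu$ one has $\langle A_i x, y\rangle = \langle x, A_i y\rangle$ in $\mathbb{R}^k$, and both $A_ix,A_iy$ lie in $E_\mu$ by the invariance argument above, so the identity transfers to the inner product on $E_\mu$. Once this is in place, the induction closes cleanly and the two implications together yield the equivalence.
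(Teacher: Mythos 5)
Your proof is correct. Note, however, that the paper offers no proof of this lemma at all---it is quoted verbatim from Heinze's dissertation \cite{heinze2001applications} and used as a black box---so there is no in-paper argument to compare against. What you have written is the standard simultaneous-diagonalization argument: the direction (2) $\Rightarrow$ (1) via $A_i = Q D_i Q^T$ with commuting diagonal factors is immediate, and your induction on $n$ for (1) $\Rightarrow$ (2) is sound, including the two points that actually need care: you correctly phrase the induction hypothesis so that it applies to commuting families on spaces of \emph{any} dimension (needed because $\dim E_\mu < k$ in general), and you justify both the $A_i$-invariance of each eigenspace $E_\mu$ of $A_n$ and the symmetry of the restricted operators with respect to the inherited inner product. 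The only step left implicit is that the concatenated bases form an orthonormal basis of $\mathbb{R}^k$ because distinct eigenspaces of the symmetric matrix $A_n$ are mutually orthogonal, but this is already contained in your invocation of the spectral theorem as an orthogonal direct sum decomposition.
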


\begin{nota}\rm{
In the rest of the paper, we consider an arbitrary simple graph $H$ with $V(H)=1,2,\ldots,l$. Also we consider vertex disjoint simple graphs $H_1.H_2,\ldots,H_l,$ of order $n$ each.
For integers $n$ and $r$, we denote by $e_{n,r}$ a column vector of dimension $n$ where  the $r^{th}$ entry is $1$ and all others are  $0$. The all-ones column vector of dimension $n$ is denoted by $j_ n$.}
\end{nota}

\subsection{Adjacency Spectrum of  $ {\Lambda}_{H} (H_1,H_2, \ldots,H_l)$ }

Here we consider the $H-$product of graphs  $\widehat{G} = {\Lambda}_{H} (H_1,H_2, \ldots,H_l)$ as defined in the introduction. We note that the graph   $\widehat{G} = {\Lambda}_{H} (H_1,H_2, \ldots,H_l)$ has an almost equitable  $l$ partition $\Pi(\widehat{G})= \{V(H_1),V(H_2),\ldots,V(H_l)\}$ such that 
\begin{equation*}
	d_{jk}  = \begin{cases}
		1 \text{~if~} jk \in E(H)\\
		0 \text{~otherwise}, 
	\end{cases}
	\text{~for~} 1 \leq j,k \leq l.
\end{equation*} 
\begin{theorem}\label{theorem3.1}
	Suppose  $H_1,H_2,\ldots,H_l$ are  commuting graphs. Let $\lambda_{i,j}$ represent the eigenvalues of $H_j,$ where $i = 1,2, \ldots,n,$ $ j = 1,2,\ldots,l$, and let $u_1,u_2,\ldots,u_n$ form an orthonormal set of eigenvectors for the adjacency matrix $A(H_1)$. Then, the adjacency spectrum  of    $ {\Lambda}_{H} (H_1,H_2, \ldots,H_l)$  consists of   the eigenvalues of 
	\begin{equation}
		{C_i}=	\begin{pmatrix}
			\lambda_{i,1} & \rho_{1,2}&\ldots&\rho_{1,(l-1)}&\rho_{1,l}\\
			\rho_{2,1}&\lambda_{i,2}& \ldots&\rho_{2,(l-1)}&\rho_{2,l}\\
			\vdots&\vdots&\ddots&\vdots&\vdots\\
			\rho_{(l-1),1}&\rho_{(l-1),2}&\ldots&\lambda_{i,(l-1)}&\rho_{(l-1),l}\\
			\rho_{l,1}&\rho_{l,2}&\ldots&\rho_{l,(l-1)}&\lambda_{i,l}
		\end{pmatrix}_{l \times l}
	\end{equation}
 where  $i = 1,2,\ldots,n$, and
\begin{equation*}
	\rho_{j,k} = \rho_{k,j} = \begin{cases}
		1 \text{~if~} jk \in E(H)\\
		0 \text{~otherwise}, 
	\end{cases}
\text{~for~} 1 \leq j,k \leq l.
\end{equation*} 
\end{theorem}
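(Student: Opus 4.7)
The plan is to recognize the adjacency matrix of $\widehat{G} = {\Lambda}_{H}(H_1,H_2,\ldots,H_l)$ as a block matrix of exactly the shape required by Theorem \ref{theorem2}, and then invoke that theorem with $k$ equal to the common order $n$ of the graphs $H_j$.

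First, I would order the vertices of $\widehat{G}$ as $V(H_1), V(H_2), \ldots, V(H_l)$ with the $i$-th vertex of each $H_j$ in the same position. From the definition of the $H$-product, the edges of $\widehat{G}$ are precisely the edges of each $H_j$ together with, for every $jk \in E(H)$ and every $i \in \{1,\ldots,n\}$, the edge $v_{ji}v_{ki}$. Hence the adjacency matrix has the block form
\begin{equation*}
A(\widehat{G}) = \begin{pmatrix}
A(H_1) & \rho_{1,2} I_n & \cdots & \rho_{1,l} I_n\\
\rho_{2,1} I_n & A(H_2) & \cdots & \rho_{2,l} I_n\\
\vdots & \vdots & \ddots & \vdots\\
\rho_{l,1} I_n & \rho_{l,2} I_n & \cdots & A(H_l)
\end{pmatrix},
\end{equation*}
where $\rho_{j,k} = \rho_{k,j} = 1$ if $jk \in E(H)$ and $0$ otherwise.

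Next I would exploit the commuting hypothesis. By Lemma \ref{lemma1}, the symmetric matrices $A(H_1),\ldots,A(H_l)$ share a common orthonormal basis of eigenvectors $u_1,u_2,\ldots,u_n$; writing $A(H_j) u_i = \lambda_{i,j} u_i$ gives the eigenvalue labelling in the statement. Let $U = (u_1 \mid u_2 \mid \cdots \mid u_n)$, an $n \times n$ orthogonal matrix. Then $U U^T = I_n$, so for every pair $(j,k)$ the off-diagonal block can be rewritten as $\rho_{j,k} I_n = \rho_{j,k} U U^T$. Setting $U_j := U$ for all $j = 1,\ldots,l$, we see that $A(\widehat{G})$ has exactly the form of the matrix $C$ appearing in Theorem \ref{theorem2}, with $m_j = n$ and $k = n$ so that each $U_j$ uses the full orthonormal eigenbasis.

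Applying Theorem \ref{theorem2} with these choices, the "leftover" eigenvalues $\lambda_{k+1,j},\ldots,\lambda_{m_j,j}$ constitute an empty list (since $k = n = m_j$), and the entire spectrum of $A(\widehat{G})$ is contributed by the $n$ small matrices $C_t$, $t = 1,\ldots,n$, whose $(j,k)$-entries are $\rho_{j,k}$ off the diagonal and $\lambda_{t,j}$ on the diagonal. These are precisely the matrices $C_i$ in the statement, which finishes the proof. The only step requiring any thought is the identification $I_n = U U^T$, which is what makes the off-diagonal blocks match the template of Theorem \ref{theorem2}; every remaining step is a direct translation.
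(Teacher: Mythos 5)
Your proposal is correct and follows essentially the same route as the paper: write $A(\widehat{G})$ in block form with off-diagonal blocks $\rho_{j,k}I_n$, use Lemma \ref{lemma1} to get a common orthonormal eigenbasis, replace $I_n$ by $UU^T$, and apply Theorem \ref{theorem2} with $k=n$ so no leftover eigenvalues remain. Your explicit remark that the $\lambda_{k+1,j},\ldots,\lambda_{m_j,j}$ list is empty is a small clarity improvement over the paper's wording, but the argument is identical.
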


\begin{proof}
	By the definition of   $\widehat{G} = {\Lambda}_{H} (H_1,H_2, \ldots,H_l)$, the vertex set of $\widehat{G}$ is partitioned as $V(H_1) \cup V(H_2) \cup \ldots \cup V(H_l).$ Then, the adjacency matrix of  $\widehat{G} = {\Lambda}_{H} (H_1,H_2, \ldots,H_l)$ is 
		\begin{equation*} 
		\begin{pmatrix}
			A(H_1) & \rho_{1,2}I_n& \ldots& \rho_{1,(l-1)}I_n&\rho_{1,l}I_n\\
			\rho_{2,1}I_n& A(H_2)& \ldots& \rho_{2,(l-1)}I_n&\rho_{2,l}I_n\\
			\vdots& \vdots& \ddots&\vdots& \vdots\\
			\rho_{(l-1),1}I_n& \rho_{(l-1),2}I_n& \ldots&A(H_{l-1})&\rho_{(l-1),l}I_n\\
			\rho_{l,1}I_n&\rho_{l,2}I_n& \ldots& \rho_{,l(l-1)}I_n&A(H_l)
		\end{pmatrix} 
	\end{equation*}
Given that $H_1,H_2,\ldots,H_l$ are $l$ commuting graphs, and $S =\{u_1,u_2,\ldots,u_n\}$  an orthonormal set of eigenvectors of  $A(H_1),$  it follows from  Lemma \ref{lemma1} that $S$ also an orthonormal set of eigenvectors for $A(H_2), A(H_3),\ldots, A(H_{l-1}),$ and $A(H_l).$ This implies that $U =(u_1|u_2|\ldots|u_n)$ is an orthonormal matrix. By the spectral decomposition of a symmetric matrix, we have $UU^T =I_n.$ Therefore, by substituting $UU ^T$ for $I_n$ in the aforementioned matrix. We get 
	\begin{equation*}
	\begin{pmatrix}
		A(H_1) & \rho_{1,2}UU^T& \ldots& \rho_{1,(n-1)}UU^T&\rho_{1,n}UU^T\\
		\rho_{2,1}UU^T& A(H_2)& \ldots& \rho_{2,(n-1)}UU^T&\rho_{2,n}UU^T\\
		\vdots& \vdots& \ddots&\vdots& \vdots\\
		\rho_{(n-1),1}UU^T& \rho_{(n-1),2}UU^T& \ldots&A(H_{n-1})&\rho_{(n-1),n}UU^T\\
		\rho_{n,1}UU^T&\rho_{n,2}UU^T& \ldots& \rho_{n,(n-1)}UU^T&A(H_n)
	\end{pmatrix}. 
\end{equation*}
Then, the result follows directly from Theorem \ref{theorem2} and Lemma  \ref{lemma1}.
\end{proof} 


\subsection{Universal adjacency spectrum of $ {\Lambda}_{H} (H_1,H_2, \ldots,H_l)$ }

Let $H$ be a graph with $V(H)= \{1,2,\ldots,l\}$, and $H_1,H_2,\ldots,H_l$ graphs with $V(H_j)= \{v_{j1},v_{j2},\ldots,v_{jn}\}$ for each $j = 1,2,\ldots,l$. Then, the degree of the vertex $v_{ji}$ in  $\widehat{G} = {\Lambda}_{H} (H_1,H_2, \ldots,H_l)$ is given by
\begin{equation*}
	deg_{\widehat{G}} (v_{ji})= deg_{H_j}(v_{ji}) + \sum_{\underset{j \neq k}{j=1}} ^l \rho_{k,j} 
\end{equation*}

\begin{equation*}
		\rho_{j,k} = \rho_{k,j} = \begin{cases}
		1 \text{~if~} jk \in E(H)\\
		0 \text{~otherwise}, 
	\end{cases}
	\text{~for~} 1 \leq j,k \leq l.
\end{equation*}

For $\alpha \neq 0$ and $\beta,\gamma,\eta \in \mathbb{R}$, let $U(\widehat{G}) = \alpha A(\widehat{G})+ \beta D(\widehat{G})+ \gamma I+ \eta J$ be the universal adjacency matrix of  $\widehat{G} = {\Lambda}_{H} (H_1,H_2, \ldots,H_l)$. Then, $U(\widehat{G})$ can be expressed in terms of the universal adjacency matrices of $H_j$ as follows: 
\begin{equation*} 
U(\widehat{G})=	\begin{pmatrix}
		U(H_1)+ (\beta \sum_{\underset{j \neq 1}{j=1}} ^l \rho_{1,j} ) I & \rho_{1,2} \alpha I + \eta J & \ldots  & \rho_{1,l} \alpha I + \eta J\\
		\rho_{2,1} \alpha I + \eta J&	U(H_2)+ (\beta  \sum_{\underset{j \neq 2}{j=1}} ^l \rho_{2,j} )I& \ldots&  \rho_{2,l} \alpha I+ \eta J\\
		\vdots&\vdots& \ddots&\vdots\\
		\rho_{l,1} \alpha I + \eta J&  \rho_{l,2} \alpha I + \eta J& \ldots& U(H_l)+   (\beta {\sum_{\underset{j \neq l}{j=1}} ^l} \rho_{l,j})I\\
	\end{pmatrix}
\end{equation*}
\begin{theorem}\label{theorem4.1}
 	Suppose  $H_1,H_2,\ldots,H_l$ are  regular commuting graphs  such that  $H_j$ is an $r_j$ regular graph for $j = 1,2,\ldots,l$. Let $\lambda_{i,j}$ be the adjacency eigenvalues of $H_j,$ where $i = 1,2, \ldots,n,$ and  let $u_1,u_2,\ldots,u_n$ form an orthonormal set of eigenvectors for the universal adjacency matrix $U(H_1)$. Then, the universal adjacency spectrum of   $\widehat{G}= {\Lambda}_{H} (H_1,H_2, \ldots,H_l)$ consists of the spectrum of 
	\begin{equation*}
	{C_1}=	\scriptsize
	\begin{pmatrix}
		\alpha r_1+ (r_1+\sum_{\underset{j \neq 1}{j=1}} ^l \rho_{1,j}) \beta+ \gamma+n \eta & \rho_{1,2} \alpha+ n\eta& \ldots&   \rho_{1,l} \alpha+ n\eta\\
		\rho_{2,1} \alpha +n \eta& 	\alpha r_2+ (r_2+ \sum_{\underset{j \neq 2}{j=1}} ^l \rho_{2,j}) \beta+ \gamma+n \eta & \ldots&  \rho_{2,l} \alpha+ n\eta\\
		\vdots& \vdots& \ddots& \vdots\\
		
		\rho_{l,1}\alpha + n \eta& 	\rho_{l,2}\alpha + n \eta& \ldots & 	 \alpha r_l+ (r_l+{\sum_{\underset{j \neq l}{j=1}} ^l} \rho_{l,j}) \beta+ \gamma+n \eta \\
		\end{pmatrix}
	\end{equation*}
 and
 \begin{equation*} 
 		{C_i}=
 		\scriptsize
 \begin{pmatrix}
 	\lambda_{i,1}\alpha+  (r_1+{\sum_{j=2} ^l} \rho_{1,j}) \beta+ \gamma & \rho_{1,2} \alpha& \ldots&   \rho_{1,l}\alpha\\
 	\rho_{2,1}\alpha&  	\lambda_{i,2}\alpha+  (r_2+ {\sum_{\underset{j \neq 2}{j=1}} ^l} \rho_{2,j}) \beta+ \gamma & \ldots &  \rho_{2,l}\alpha\\
 	\vdots&\vdots&\ddots&\vdots\\
 	\rho_{l,1}\alpha & \rho_{l,2}& \ldots& 	\lambda_{i,l}\alpha+  (r_l+ {\sum_{\underset{j \neq l}{j=1}} ^l} \rho_{l,j}) \beta+ \gamma
 \end{pmatrix}
\end{equation*}
 for $ i = 2, 3,\ldots,n,$ where 
 \begin{equation*}
 	\rho_{j,k} = \rho_{k,j} = \begin{cases}
 		1 \text{~if~} jk \in E(H)\\
 		0 \text{~otherwise}, 
 	\end{cases}
 	\text{~for~} 1 \leq j,k \leq l.
 \end{equation*} 
\end{theorem}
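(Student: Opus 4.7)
The plan is to mirror the argument of Theorem~\ref{theorem2}, but with the off-diagonal coupling treated separately for the Perron eigenvector, since the $\eta J$ part of $U(\widehat{G})$ ``sees'' only the all-ones direction. First I would use the already displayed block form of $U(\widehat{G})$: with $D(H_j)=r_j I$ coming from regularity, the diagonal block reads $A_j := \alpha A(H_j)+\bigl(r_j\beta+\gamma+\beta\sum_{k\ne j}\rho_{j,k}\bigr)I+\eta J$, and the off-diagonal $(j,k)$ block is $\rho_{j,k}\alpha I+\eta J$.

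Next I would invoke Lemma~\ref{lemma1} together with regularity. Commutativity of $A(H_1),\ldots,A(H_l)$ yields a common orthonormal eigenbasis $u_1,\ldots,u_n$, and since every $H_j$ is regular, I can take $u_1=\tfrac{1}{\sqrt n}j_n$, so that $Ju_1=nu_1$ while $Ju_t=0$ for all $t\ge 2$. Consequently each $u_t$ is an eigenvector of every $A_j$: the eigenvalue is $\alpha r_j+(r_j+\sum_{k\ne j}\rho_{j,k})\beta+\gamma+n\eta$ when $t=1$ and $\alpha\lambda_{t,j}+(r_j+\sum_{k\ne j}\rho_{j,k})\beta+\gamma$ when $t\ge 2$, precisely matching the diagonal entries of $C_1$ and of $C_t$, $t\ge 2$.

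Then I would run the analogue of the computation in the proof of Theorem~\ref{theorem2}. Fix $t\in\{1,\ldots,n\}$ and $\mathbf{w}=(w_1,\ldots,w_l)^T\in\mathbb{R}^l$, and build the $nl$-dimensional block vector $\widetilde{\mathbf{w}}_t=(w_1 u_t,\ldots,w_l u_t)^T$. Because $(\rho_{j,k}\alpha I+\eta J)u_t=(\rho_{j,k}\alpha+n\eta\,\delta_{t,1})u_t$, the $j$-th block-row of $U(\widehat{G})\widetilde{\mathbf{w}}_t$ collapses to a scalar multiple of $u_t$, and collecting the coefficients gives exactly $C_t\mathbf{w}$ with $C_t$ as in the statement. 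Thus $\widetilde{\mathbf{w}}_t$ is an eigenvector of $U(\widehat{G})$ with eigenvalue $\mu$ if and only if $\mathbf{w}$ is an eigenvector of $C_t$ with eigenvalue $\mu$.

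A dimension count finishes the argument: as $t$ ranges over $1,\ldots,n$ and $\mathbf{w}$ over an orthonormal eigenbasis of the real symmetric matrix $C_t$, the $\widetilde{\mathbf{w}}_t$ form an orthonormal family of $nl$ eigenvectors of $U(\widehat{G})$, which therefore exhausts its spectrum. The main obstacle is not computational but conceptual, namely recognizing that Theorem~\ref{theorem2} cannot be quoted verbatim because the off-diagonal blocks $\rho_{j,k}\alpha I+\eta J$ are not of the pure form $\rho_{j,k}U_jU_k^T$; once one writes $J=n u_1 u_1^T$ on the common basis, the split into one matrix $C_1$ (absorbing the $n\eta$ correction) and matrices $C_i$ for $i\ge 2$ (where $J$ is annihilated) is forced, and the verification reduces to the block computation just sketched.
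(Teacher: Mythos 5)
Your proposal is correct and follows essentially the same route as the paper: choose the common orthonormal eigenbasis with $u_1=\tfrac{1}{\sqrt n}j_n$ (so $Ju_1=nu_1$ and $Ju_t=0$ for $t\ge 2$), form the block vectors $(w_1u_t,\ldots,w_lu_t)^T$, and verify by the block-row computation that they are eigenvectors of $U(\widehat{G})$ with the eigenvalues of $C_1$ and $C_i$ respectively. Your closing remarks on orthonormality and the count of $nl$ eigenvectors make explicit a completeness point the paper leaves implicit, but the argument is the same.
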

\begin{proof}
	Let $\mu_{s,1}$ be the eigenvalue of $ {C_1}$   corresponding to the eigenvector $ \widehat{w_{s,1}} = (w_{1s1},w_{2s1},\ldots,w_{ls1})$ for $s= 1,2,\ldots,l$. Then,
	\begin{equation*}
	{C_1} \widehat{w_{s,1}}=	\scriptsize
	\begin{pmatrix}
	\alpha r_1+ (r_1+\sum_{\underset{j \neq 1}{j=1}} ^l \rho_{1,j}) \beta+ \gamma+n \eta & \rho_{1,2} \alpha+ n\eta& \ldots&   \rho_{1,l} \alpha+ n\eta\\
	\rho_{2,1} \alpha +n \eta& 	\alpha r_2+ (r_2+ \sum_{\underset{j \neq 2}{j=1}} ^l \rho_{2,j}) \beta+ \gamma+n \eta & \ldots&  \rho_{2,l} \alpha+ n\eta\\
	\vdots& \vdots& \ddots& \vdots\\
	
	\rho_{l1}\alpha + n \eta& 	\rho_{l,2}\alpha + n \eta& \ldots & 	 \alpha r_l+ (r_l+{\sum_{\underset{j \neq l}{j=1}} ^l} \rho_{l,j}) \beta+ \gamma+n \eta \\
\end{pmatrix}
\begin{pmatrix}
	w_{1s1}\\
	w_{2s1}\\
	\vdots\\
	w_{ls1}
\end{pmatrix}
\end{equation*}
\begin{equation}\label{eq4*}
=	\begin{pmatrix}
	(\alpha r_1+ (r_1+\sum_{\underset{j \neq 1}{j=1}} ^l \rho_{1,j}) \beta+ \gamma+n \eta)w_{1s1} + (\rho_{1,2} \alpha+ n\eta)w_{2s1}+ \ldots+ (\rho_{1,l} \alpha+ n\eta)w_{ls1}\\
	(\rho_{2,1} \alpha +n \eta)w_{1s1}+ 	(\alpha r_2+ (r_2+\sum_{\underset{j \neq 2}{j=1}} ^l \rho_{2,j}) \beta+ \gamma+n \eta)w_{2s1}+\ldots+ ( \rho_{2,l} \alpha+ n\eta)w_{ls1}\\
	\vdots\\
	(\rho_{l,1}\alpha + n \eta)w_{1s1} + 	(\rho_{l,2}\alpha + n \eta)w_{2s1}+ \ldots + (\alpha r_l+(r_l+{\sum_{\underset{j \neq l}{j=1}} ^l} \rho_{l,j}) \beta+ \gamma+n \eta)w_{ls1}
	\end{pmatrix} = \mu_{s,1} \begin{pmatrix}
	w_{1s1}\\
	w_{2s1}\\
	\vdots\\
	w_{ls1}
\end{pmatrix}
\end{equation}
Now, let $u_1 = \frac{1}{\sqrt{n}} j_n$ and $\frac{1}{\sqrt{n}}$ $\begin{pmatrix}
		w_{1s1}j_n\\
	w_{2s1}j_n\\
	\vdots\\
	w_{ls1}j_n
	\end{pmatrix}$
be a vector of dimension $nl$. We show that this is an eigenvector of the matrix $U(\widehat{G})$ corresponding to the eigenvalue $\mu_{s,1}$.
\begin{equation*} 
\begin{pmatrix}
	U(H_1)+ (\beta \sum_{\underset{j \neq 1}{j=1}} ^l \rho_{1,j} ) I & \rho_{1,2} \alpha I + \eta J & \ldots  & \rho_{1,l} \alpha I + \eta J\\
	\rho_{2,1} \alpha I + \eta J&	U(H_2)+ (\beta  \sum_{\underset{j \neq 2}{j=1}} ^l \rho_{2,j} )I& \ldots&  \rho_{2,l} \alpha I+ \eta J\\
	\vdots&\vdots& \ddots&\vdots\\
	\rho_{l,1} \alpha I + \eta J&  \rho_{l,2} \alpha I + \eta J& \ldots& U(H_l)+   (\beta {\sum_{\underset{j \neq l}{j=1}} ^l} \rho_{l,j})I\\
\end{pmatrix}
\begin{pmatrix}
	\frac{w_{1s1}}{\sqrt{n}}j_n\\
	\frac{w_{2s1}}{\sqrt{n}}j_n\\
	\vdots\\
	\frac{w_{ls1}}{\sqrt{n}}j_n
\end{pmatrix}
\end{equation*}
\begin{equation*}
=\frac{1}{\sqrt{n}}	\begin{pmatrix}
		w_{1s1}(U(H_1)+ (\beta\sum_{\underset{j \neq 1}{j=1}} ^l \rho_{1,j}) I)j_n+  w_{2s1}(\rho_{1,2} \alpha I + \eta J )j_n+ \ldots+  w_{ls1}(\rho_{1,l} \alpha I + \eta J)j_n\\
		w_{1s1}( \rho_{2,1} \alpha I + \eta J )j_n+w_{2s1}(U(H_2)+ (\beta \sum_{\underset{j \neq 2}{j=1}} ^l \rho_{2,j})I)j_n+ \ldots + 	w_{ls1}( \rho_{2,1} \alpha I + \eta J )j_n\\
		\vdots\\
		w_{1s1}( \rho_{l,1} \alpha I + \eta J )j_n+ 	w_{2s1}( \rho_{l,2} \alpha I + \eta J )j_n+\ldots+ w_{ls1}(U(H_l)+  (\beta{\sum_{\underset{j \neq l}{j=1}} ^l} \rho_{l,j})I)j_n\\
		\end{pmatrix}
\end{equation*}
\begin{equation*}
	=\frac{1}{\sqrt{n}}	\begin{pmatrix}
		w_{1s1}(U(H_1)j_n+ (\beta\sum_{\underset{j \neq 1}{j=1}} ^l \rho_{1,j}) Ij_n)+  w_{2s1}(\rho_{1,2} \alpha Ij_n + \eta J j_n )+ \ldots+  w_{ls1}(\rho_{1,l} \alpha Ij_n + \eta Jj_n)\\
		w_{1s1}( \rho_{2,1} \alpha I j_n + \eta J j_n)+w_{2s1}(U(H_2)j_n+ (\beta \sum_{\underset{j \neq 2}{j=1}} ^l \rho_{2,j})Ij_n)+ \ldots + 	w_{ls1}( \rho_{2,1} \alpha Ij_n + \eta Jj_n )\\
		\vdots\\
		w_{1s1}( \rho_{l,1} \alpha I j_n + \eta J j_n )+ 	w_{2s1}( \rho_{l,2} \alpha I j_n + \eta Jj_n )+\ldots+ w_{ls1}(U(H_l)j_n+  (\beta{\sum_{\underset{j \neq l}{j=1}} ^l} \rho_{l,j})Ij_n)\\
	\end{pmatrix}
\end{equation*}
 
 \begin{equation*}
 	=\frac{1}{\sqrt{n}} \begin{pmatrix}
 		w_{1s1}((r_1 \alpha + r_1 \beta + \gamma + n \eta)j_n +(\beta\sum_{\underset{j \neq 1}{j=1}} ^l \rho_{1,j})j_n ) + w_{2s1} (\rho_{1,2} \alpha j_n + n\eta  j_n) + \ldots + w_{ls1}(\rho_{1,l} \alpha j_n +n \eta j_n)\\
 			w_{1s1}( \rho_{2,1} \alpha  j_n + n\eta  j_n)+ 	w_{2s1}((r_2 \alpha + r_2 \beta + \gamma + n \eta)j_n +(\beta\sum_{\underset{j \neq 2}{j=1}} ^l \rho_{2,j})j_n ) +\ldots +  w_{ls1}(\rho_{2,l} \alpha j_n +n \eta j_n)\\
 			\vdots\\
 				w_{1s1}( \rho_{l,1} \alpha  j_n + n\eta  j_n )+  	w_{2s1}( \rho_{l,2} \alpha  j_n + n\eta j_n )+\ldots+ w_{ls1}((r_l \alpha + r_l \beta + \gamma + n \eta)j_n+  (\beta{\sum_{\underset{j \neq l}{j=1}} ^l} \rho_{l,j}))j_n
 	\end{pmatrix}
 \end{equation*}

\begin{equation*}
	= \frac{1}{\sqrt{n}} \begin{pmatrix}
	(w_{1s1}(r_1 \alpha +\beta({r_1+\sum_{\underset{j \neq 1}{j=1}} ^l \rho_{1,j}})  + \gamma + n \eta ) + w_{2s1} (\rho_{1,2} \alpha  + n\eta ) + \ldots + w_{ls1}(\rho_{1,l} \alpha +n \eta ))j_n\\
(	w_{1s1}( \rho_{2,1} \alpha   + n\eta )+ 	w_{2s1}(r_2 \alpha  +\beta ({r_2 +\sum_{\underset{j \neq 2}{j=1}} ^l \rho_{2,j}})+ \gamma + n \eta ) +\ldots +  w_{ls1}(\rho_{2,l} \alpha  +n \eta))j_n\\
	\vdots\\
	(w_{1s1}( \rho_{l,1} \alpha  + n\eta )+  	w_{2s1}( \rho_{l,2} \alpha   + n\eta )+\ldots+ w_{ls1}(r_l \alpha+  \beta ({r_l+ {\sum_{\underset{j \neq l}{j=1}} ^l} \rho_{l,j}}) +  \gamma + n \eta))j_n
\end{pmatrix}
\end{equation*}

From Equation (\ref{eq4*}), we get
\begin{equation*}
\frac{1}{\sqrt{n}}	\begin{pmatrix}
		\mu_{s,1} w_{1s1}j_n\\
	\mu_{s,1}	w_{2s1}j_n\\
	\vdots\\
	\mu_{s,1}	w_{ls1}j_n
	\end{pmatrix}
= 	\mu_{s,1} \begin{pmatrix}
	\frac{w_{1s1}}{\sqrt{n}} j_n\\
\frac{w_{2s1}}{\sqrt{n}}j_n\\
	\vdots\\
	\frac{w_{ls1}}{\sqrt{n}}j_n
\end{pmatrix}
\end{equation*}
 Hence, for $s = 1,2,\ldots,l$, the value $\mu_{s,1}$ is an eigenvalue of $U(\widehat{G})$ corresponding to the eigenvector $ \frac{1}{\sqrt{n}} \begin{pmatrix}
 	w_{1s1}j_n\\
 	w_{2s1}j_n\\
 	\vdots\\
 	w_{ls1}j_n
 \end{pmatrix}$.\\
Now, 	let $\mu_{s,i}$ be the eigenvalue of $ {C_i
}$   corresponding to the eigenvector $ \widehat{w_{s,i}} = (w_{1si},w_{2si},\ldots,w_{lsi})^T$ for $s= 1,2,\ldots,l$ and $i= 2,3,\ldots,n$. Then, 
 \begin{equation*} 
	{C_i}\widehat{w_{s,i}} =
	\scriptsize
	\begin{pmatrix}
		\lambda_{i,1}\alpha+  (r_1+\sum_{\underset{j \neq 1}{j=1}} ^l \rho_{1,j}) \beta+ \gamma & \rho_{1,2} \alpha& \ldots& \rho_{1,l}\alpha\\
		\rho_{2,1}\alpha& 	\lambda_{i,2}\alpha+  (r_2+\sum_{\underset{j \neq 2}{j=1}} ^l \rho_{2,j}) \beta+ \gamma & \ldots& \rho_{2,l}\alpha\\
		\vdots&\vdots&\ddots&\vdots\\
		
		\rho_{l,1}\alpha & \rho_{l,2}\alpha& \ldots& 	\lambda_{i,l}\alpha+  (r_l+{\sum_{\underset{j \neq l}{j=1}} ^l} \rho_{l,j}) \beta+ \gamma
	\end{pmatrix}
\begin{pmatrix}
	w_{1si}\\
	w_{2si}\\
	\vdots\\
	w_{lsi}
\end{pmatrix}
\end{equation*}

\begin{equation}\label{eq5*}
	=	\begin{pmatrix}
		(\alpha	\lambda_{i,1}+  (r_1+\sum_{\underset{j \neq 1}{j=1}} ^l \rho_{1,j}) \beta+ \gamma )w_{1si}+ \rho_{1,2} \alpha w_{2si}+ \ldots+ \rho_{1,l}\alpha w_{lsi}\\
		\rho_{2,1}\alpha w_{1si}+ (		\lambda_{i,2}\alpha+  (r_2+\sum_{\underset{j \neq 2}{j=1}} ^l \rho_{2,j}) \beta+ \gamma )w_{2si}+ \ldots+ \rho_{2,l}\alpha w_{lsi}\\
		\vdots\\
		\rho_{l,1}\alpha w_{1si}+ \rho_{l,2}\alpha w_{2si}+ \ldots + (	\lambda_{i,l}\alpha+  (r_l+{\sum_{\underset{j \neq l}{j=1}} ^l} \rho_{l,j}) \beta+ \gamma)w_{lsi}
	\end{pmatrix} = 
\mu_{s,i}\begin{pmatrix}
	w_{1si}\\
	w_{2si}\\
	\vdots\\
	w_{lsi}
\end{pmatrix}
\end{equation}
  Now, let $\begin{pmatrix}
  	w_{1si}u_i\\
  	w_{2si}u_i\\
  	\vdots\\
  	w_{lsi}u_i
  \end{pmatrix}$ be a vector of dimension $nl$, we show that this vector is an eigenvector of $U(\widehat{G})$ with respect to the eigenvalue $\mu_{s,i}.$ 
\begin{equation*} 
\begin{pmatrix}
	U(H_1)+ (\beta \sum_{\underset{j \neq 1}{j=1}} ^l \rho_{1,j} ) I & \rho_{1,2} \alpha I + \eta J & \ldots  & \rho_{1,l} \alpha I + \eta J\\
	\rho_{2,1} \alpha I + \eta J&	U(H_2)+ (\beta  \sum_{\underset{j \neq 2}{j=1}} ^l \rho_{2,j} )I& \ldots&  \rho_{2l} \alpha I+ \eta J\\
	\vdots&\vdots& \ddots&\vdots\\
	\rho_{l,1} \alpha I + \eta J&  \rho_{l,2} \alpha I + \eta J& \ldots& U(H_l)+   (\beta {\sum_{\underset{j \neq l}{j=1}} ^l} \rho_{l,j})I\\
\end{pmatrix}
	 \begin{pmatrix}
	w_{1si}u_i\\
	w_{2si}u_i\\
	\vdots\\
	w_{lsi}u_i
\end{pmatrix}
\end{equation*}

\begin{equation*}
	=	\begin{pmatrix}
		w_{1si}(U(H_1)+ (\beta\sum_{\underset{j \neq 1}{j=1}} ^l \rho_{1,j}) I)u_i+  w_{2si}(\rho_{1,2} \alpha I + \eta J )u_i+ \ldots+  w_{lsi}(\rho_{1,l} \alpha I + \eta J)u_i\\
		w_{1si}( \rho_{2,1} \alpha I + \eta J )u_i+w_{2si}(U(H_2)+ (\beta  \sum_{\underset{j \neq 2}{j=1}} ^l \rho_{2,j})I)u_i+ \ldots + 	w_{lsi}( \rho_{2,1} \alpha I + \eta J )u_i\\
		\vdots\\
		w_{1si}( \rho_{l,1} \alpha I + \eta J )u_i+ 	w_{2si}( \rho_{l,2} \alpha I + \eta J )u_i+\ldots+ w_{lsi}(U(H_l)+  (\beta{\sum_{\underset{j \neq l}{j=1}} ^l} \rho_{l,j})I)u_i\\
	\end{pmatrix}
\end{equation*}

\begin{equation*}
	=	\begin{pmatrix}
		w_{1si}(U(H_1)u_i+ (\beta\sum_{\underset{j \neq 1}{j=1}} ^l \rho_{1,j}) Iu_i)+  w_{2si}(\rho_{1,2} \alpha Iu_i + \eta J u_i )+ \ldots+  w_{lsi}(\rho_{1,l} \alpha Iu_i + \eta Ju_i)\\
		w_{1si}( \rho_{2,1} \alpha I u_i + \eta J u_i)+w_{2si}(U(H_2)u_i+ (\beta  \sum_{\underset{j \neq 2}{j=1}} ^l \rho_{2,j})Iu_i)+ \ldots + 	w_{lsi}( \rho_{2,1} \alpha Iu_i + \eta Ju_i )\\
		\vdots\\
		w_{1si}( \rho_{l,1} \alpha I u_i + \eta J u_i)+ 	w_{2si}( \rho_{l,2} \alpha I u_i + \eta Ju_i )+\ldots+ w_{lsi}(U(H_l)u_i+  (\beta{\sum_{\underset{j \neq l}{j=1}} ^l} \rho_{l,j})Iu_i)\\
	\end{pmatrix}
\end{equation*}

\begin{equation*}
	=	\begin{pmatrix}
		w_{1si}((\lambda_{i,1} \alpha + r_1 +\gamma )u_i+ (\beta\sum_{\underset{j \neq 1}{j=1}} ^l \rho_{1,j}) u_i)+  w_{2si}(\rho_{1,2} \alpha u_i +0u_i )+ \ldots+  w_{lsi}(\rho_{1,l} \alpha u_i + 0u_i)\\
		w_{1si}( \rho_{2,1} \alpha  u_i + \eta 0 u_i)+w_{2si}((\lambda_{i,2} \alpha + r_2 +\gamma )u_i+ (\beta  \sum_{\underset{j \neq 2}{j=1}} ^l \rho_{2,j})u_i)+ \ldots + 	w_{lsi}( \rho_{2,1} \alpha u_i + 0u_i )\\
		\vdots\\
		w_{1si}( \rho_{l,1} \alpha  u_i + 0 u_i)+ 	w_{2si}( \rho_{l,2} \alpha  u_i + \eta 0u_i )+\ldots+ w_{lsi}((\lambda_{i,l} \alpha + r_l +\gamma )u_i+  (\beta{\sum_{\underset{j \neq l}{j=1}} ^l} \rho_{l,j})u_i)\\
	\end{pmatrix}
\end{equation*}
 \begin{equation*}
 	=	\begin{pmatrix}
 		((\alpha\lambda_{i,1}+  (r_1+\sum_{\underset{j \neq 1}{j=1}} ^l \rho_{1,j}) \beta+ \gamma )w_{1si}+ \rho_{1,2} \alpha w_{2si}+ \ldots+ \rho_{1,l}\alpha w_{lsi})u_i\\
 		(\rho_{2,1}\alpha w_{1si}+ (\lambda_{i,2}\alpha+  (r_2+\sum_{\underset{j \neq 2}{j=1}} ^l \rho_{2,j}) \beta+ \gamma )w_{2si}+ \ldots+ \rho_{2,l}\alpha w_{lsi})u_i\\
 		\vdots\\
 		(\rho_{l,1}\alpha w_{1si}+ \rho_{l,2}\alpha w_{2si}+ \ldots + (\lambda_{i,l}\alpha+  (r_l+{\sum_{\underset{j \neq l}{j=1}} ^l} \rho_{l,j}) \beta+ \gamma)w_{lsi})u_i
 	\end{pmatrix}
 \end{equation*}
 From equation (\ref{eq5*}), 
 \begin{equation*}
 	\begin{pmatrix}
 	\mu_{s,i}	w_{1si}u_i\\
 	\mu_{s,i}	w_{2si}u_i\\
 		\vdots\\
 	\mu_{s,i}	w_{lsi}u_i
 	\end{pmatrix} = 
 	\mu_{s,i}	\begin{pmatrix}
 	w_{1si}u_i\\
 	w_{2si}u_i\\
 	\vdots\\
	w_{lsi}u_i
 \end{pmatrix}
 \end{equation*}
 Hence, $\mu_{s,i}$ be the eigenvalue of $U(\widehat{G})$ with respect to  eigenvector
 $\begin{pmatrix}
 	w_{1si}u_i\\
 	w_{2si}u_i\\
 	\vdots\\
 	w_{lsi}u_i
 \end{pmatrix}$ for $s= 1,2,\ldots,l$ and $i= 2,3,\ldots,n.$  

\end{proof}

\begin{example}\rm{We consider the $H-$product of graphs given in Figure \ref{fig:path5}.  Let $H = K_{1,3}+ \{12\}$, $H_1 = K_4$, $H_2 = 2K_2$, $H_3 = C_4$, $H_4= K_4$, and the graph   ${\Lambda}_{H} (H_1,H_2,H_3,H_4)$ is shown in Figure \ref{fig:path6}. Let $ (\alpha,\beta,\gamma,\eta)= (2,1,2,1)$, and $U(H_j)$ is universal adjacency matrix of the graph $H_j$ with $4$ vertices. Recall that $Spec_A(H_1)= \{3,-1,-1,-1\}$, $Spec_A(H_2)= \{1,1,-1,-1\}$, $Spec_A(H_3)= \{2,-2,0,0\}$, $Spec_A(H_4)= \{3,-1,-1,-1\}$. With   $ (\alpha,\beta,\gamma,\eta)= (2,1,2,1)$, $Spec( U(H_1))= \{17,5,5,5\}$, $Spec( U(H_2))= \{11,7,3,3\}$, $Spec( U(H_3))= \{15,3,7,7\}$, $Spec( U(H_4))= \{16,4,4,4\}.$ Let 	$ {U}=  \begin{pmatrix}
			1/2 & 1/2& 1/2& 1/2\\
			1/2 & -1/2& -1/2 &1/2\\
			1/2& 1/2& -1/2& -1/2\\
			1/2 & -1/2&1/2 &-1/2
		\end{pmatrix}$ be an orthonormal set of eigenvectors for $A(H_1),A(H_2),A(H_3),$ and $A(H_4)$. Thus, we get the matrices 
		$ \widehat{C_1}=  \begin{pmatrix}
			17 & 6& 6& 4\\
		 	6 & 11& 6 &4\\
		     6& 6& 15& 6\\
		     4 &4 &6 & 16
			\end{pmatrix}$, 	
		$ \widehat{C_2}=  \begin{pmatrix}
			5 & 2& 2& 0\\
			2 & 7& 2 &0\\
			2& 2& 3& 2\\
			0&0 &2 & 4
		\end{pmatrix}$, 
		$ \widehat{C_3}=  \begin{pmatrix}
		5 & 2& 2& 0\\
		2& 3& 2 &0\\
		2& 2& 7& 2\\
		0 &0 &2 & 4
	\end{pmatrix}$, 
				$ \widehat{C_4}=  \begin{pmatrix}
				5 & 2& 2& 0\\
				2& 3& 2 &0\\
				2& 2& 7& 2\\
				0 &0 &2 & 4
			\end{pmatrix}$. And, $Spec(\widehat{C_1})= \{31.0088,12.6673,9.0000,6.3239\}$, $Spec(\widehat{C_2})= \{9.5520,5.0000,$ $3.7840,0.6640\}$, $Spec(\widehat{C_3})= \{9.8399, 5.0000,$ $2.5914,1.5687\}$,  $Spec(\widehat{C_4})= \{9.8399,$ $ 5.0000,2.5914,$ $1.5687\}$. \\
		From Theorem \ref{theorem4.1}, we know that the universal adjacency spectrum of $\widehat{G} = \bigoplus_{H} H_j$ is the eigenvalues of matrices $\widehat{C_1}, \widehat{C_2},\widehat{C_3}, \widehat{C_4}.$ Thus, $Spec(\widehat{G})=  \{31.0088,12.6673,9.8399,9.8399,9.5520,9.0000,$ $6.3239,5.0000,5.0000,3.7840,2.5914,2.5914,1.5687,1.5687,0.6640\}.$
	}
\end{example}

 \begin{corollary}\label{theorem3.2}
Suppose  $H_1,H_2,\ldots,H_l$ are  regular commuting graphs  such that  $H_j$ is an $r_j$ regular graph for $j = 1,2,\ldots,l$. Let $\mu_{i,j}$ be the Laplacian eigenvalues of $H_j,$ where $i = 1,2, \ldots,n,$ and  let $u_1,u_2,\ldots,u_n$ form an orthonormal set of eigenvectors for the Laplacian matrix $L(H_1)$. Then, the Laplacian spectrum of   ${\Lambda}_{H} (H_1,H_2, \ldots,H_l)$ consists of the spectrum of 
	\begin{equation}
		{C_i}=	\begin{pmatrix}
			\mu_{i,1}+(\sum_{\underset{j \neq 1}{j=1}} ^l \rho_{1,j}) & -\rho_{1,2}&\ldots&-\rho_{1,(l-1)}&-\rho_{1,l}\\
			-\rho_{2,1}& \mu_{i,2}+ (\sum_{\underset{j \neq 2}{j=1}} ^l \rho_{2,j})& \ldots&-\rho_{2,(l-1)}&-\rho_{2,l}\\
			\vdots&\vdots&\ddots&\vdots&\vdots\\
			-\rho_{(l-1),1}&-\rho_{(l-1),2}&\ldots&\mu_{i,(l-1)}+(\sum_{\underset{j \neq (l-1)}{j=1}} ^l \rho_{(l-1),j}) &-\rho_{(l-1),l}\\
			-\rho_{l,1}&-\rho_{l,2}&\ldots&-\rho_{l,(l-1)}&\mu_{i,l}+ (\sum_{\underset{j \neq l}{j=1}} ^l \rho_{l,j})
		\end{pmatrix}_{l \times l}
	\end{equation}
	for $i =1,2,\ldots,n$, where 
	\begin{equation*}
		\rho_{j,k} = \rho_{k,j} = \begin{cases}
			1 \text{~if~} jk \in E(H)\\
			0 \text{~otherwise}, 
		\end{cases}
		\text{~for~} 1 \leq j,k \leq l.
	\end{equation*} 
\end{corollary}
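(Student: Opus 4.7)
The plan is to deduce the corollary from Theorem \ref{theorem4.1} by specialising the universal-adjacency parameters. Since $L(G)=D(G)-A(G)$, the Laplacian of $\widehat{G}$ is precisely the universal adjacency matrix with $(\alpha,\beta,\gamma,\eta)=(-1,1,0,0)$; here $\alpha=-1\neq 0$, so Theorem \ref{theorem4.1} is applicable. Because each $H_j$ is $r_j$-regular, $L(H_j)=r_jI-A(H_j)$ has the same eigenvectors as $A(H_j)$, so the commuting hypothesis on the $A(H_j)$'s transfers (via Lemma \ref{lemma1}) to a common orthonormal eigenbasis of the $L(H_j)$'s, and one may take $u_1=\tfrac{1}{\sqrt{n}}j_n$ as the first basis vector, which is an eigenvector of every $L(H_j)$ with eigenvalue $\mu_{1,j}=0$.

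Next I would substitute $(\alpha,\beta,\gamma,\eta)=(-1,1,0,0)$ into the matrices $C_1$ and $C_i$ ($i\geq 2$) of Theorem \ref{theorem4.1}. The off-diagonal entries at position $(j,k)$ become $\rho_{j,k}\alpha+n\eta=-\rho_{j,k}$ in $C_1$ and $\rho_{j,k}\alpha=-\rho_{j,k}$ in $C_i$ for $i\geq 2$, both agreeing with the displayed matrix of the corollary. For the diagonal of $C_i$ with $i\geq 2$, the entry $(j,j)$ equals
\begin{equation*}
\lambda_{i,j}\alpha+\bigl(r_j+\textstyle\sum_{k\neq j}\rho_{j,k}\bigr)\beta+\gamma = (r_j-\lambda_{i,j})+\textstyle\sum_{k\neq j}\rho_{j,k}=\mu_{i,j}+\textstyle\sum_{k\neq j}\rho_{j,k},
\end{equation*}
using the identity $\mu_{i,j}=r_j-\lambda_{i,j}$ valid for regular graphs. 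For the diagonal of $C_1$ at $(j,j)$,
\begin{equation*}
\alpha r_j+\bigl(r_j+\textstyle\sum_{k\neq j}\rho_{j,k}\bigr)\beta+\gamma+n\eta=\textstyle\sum_{k\neq j}\rho_{j,k}=\mu_{1,j}+\textstyle\sum_{k\neq j}\rho_{j,k},
\end{equation*}
since $\mu_{1,j}=0$.

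Thus the two families $\{C_1\}$ and $\{C_i\}_{i\geq 2}$ produced by Theorem \ref{theorem4.1} collapse under this specialisation into the single family $\{C_i\}_{i=1}^{n}$ of the statement, and the Laplacian spectrum of $\widehat{G}$ is the disjoint union of their spectra, which is exactly the claim. The only subtle point, and really the main bookkeeping step, is recognising that the separate handling of $i=1$ in Theorem \ref{theorem4.1} (which accounted for the $\eta J$ contribution along $j_n$) becomes redundant here because $\eta=0$, so the $i=1$ matrix fits seamlessly into the uniform formula for all $i=1,2,\ldots,n$.
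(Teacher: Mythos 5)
Your proposal is correct and follows exactly the paper's own route: the paper's proof is the single line ``By taking $(\alpha,\beta,\gamma,\eta)=(-1,1,0,0)$ in Theorem \ref{theorem4.1}, the result follows directly,'' and you have simply carried out that substitution explicitly, including the useful observation that $\mu_{i,j}=r_j-\lambda_{i,j}$ and $\mu_{1,j}=0$ let the $C_1$ and $C_i$ families of Theorem \ref{theorem4.1} merge into the single family of the corollary. The extra bookkeeping you supply is accurate and only makes the paper's terse argument more transparent.
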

\begin{proof}
	By taking $(\alpha,\beta,\gamma,\eta)= (-1,1,0,0)$ in Theorem \ref{theorem4.1},  the result follows directly.
\end{proof}


\begin{corollary}\label{theorem3.3}
Suppose  $H_1,H_2,\ldots,H_l$ are  regular commuting graphs  such that  $H_j$ is an $r_j$ regular graph for $j = 1,2,\ldots,l$. Let $\nu_{i,j}$ be the signless Laplacian eigenvalues of $H_j,$ where $i = 1,2, \ldots,n,$ and  let $u_1,u_2,\ldots,u_n$ form an orthonormal set of eigenvectors for the signless Laplacian matrix $Q(H_1)$. Then, the  signless Laplacian spectrum of  $ {\Lambda}_{H} (H_1,H_2, \ldots,H_l)$ consists of the spectrum of 
	\begin{equation*}
		{C_i}=	\begin{pmatrix}
			\nu_{i,1}+(\sum_{\underset{j \neq 1}{j=1}} ^l \rho_{1,j}) & \rho_{1,2}&\ldots&\rho_{1,(l-1)}&\rho_{1,l}\\
			\rho_{2,1}& \nu_{i,2}+ (\sum_{\underset{j \neq 2}{j=1}} ^l \rho_{2,j})& \ldots&\rho_{2,(l-1)}&\rho_{2,l}\\
			\vdots&\vdots&\ddots&\vdots&\vdots\\
			\rho_{(l-1),1}&\rho_{(l-1),2}&\ldots&\nu_{i,(l-1)}+(\sum_{\underset{j \neq (l-1)}{j=1}} ^l \rho_{(l-1),j}) &\rho_{(l-1),l}\\
			\rho_{l,1}&\rho_{l,2}&\ldots&\rho_{l,(l-1)}&\nu_{i,l}+ (\sum_{\underset{j \neq l}{j=1}} ^l \rho_{l,j})
		\end{pmatrix}_{l \times l}
	\end{equation*}
	for $i =1,2,\ldots,n$, where
	\begin{equation*}
		\rho_{j,k} = \rho_{k,j} = \begin{cases}
			1 \text{~if~} jk \in E(H)\\
			0 \text{~otherwise}, 
		\end{cases}
		\text{~for~} 1 \leq j,k \leq l.
	\end{equation*} 
\end{corollary}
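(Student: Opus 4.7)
The plan is to deduce Corollary 3.3 as a direct specialization of Theorem 4.1, precisely in the manner of Corollary 3.2. Since the signless Laplacian satisfies $Q(G)=A(G)+D(G)$, it is obtained from the universal adjacency matrix $U(G)=\alpha A(G)+\beta D(G)+\gamma I+\eta J$ by the choice $(\alpha,\beta,\gamma,\eta)=(1,1,0,0)$. Consequently, the signless Laplacian spectrum of $\widehat{G}=\Lambda_H(H_1,H_2,\ldots,H_l)$ coincides with the universal adjacency spectrum of $\widehat{G}$ under these parameters, and Theorem 4.1 describes that spectrum as the union of the spectra of the matrices $C_i$ stated there.

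Next I would substitute $(\alpha,\beta,\gamma,\eta)=(1,1,0,0)$ into the matrices $C_i$ of Theorem 4.1 and simplify. Because each $H_j$ is $r_j$-regular, one has $Q(H_j)=A(H_j)+r_jI$, so the adjacency eigenvalues $\lambda_{i,j}$ and the signless Laplacian eigenvalues $\nu_{i,j}$ are related by $\nu_{i,j}=\lambda_{i,j}+r_j$ with a common orthonormal eigenbasis. The generic diagonal entry therefore becomes
\begin{equation*}
\lambda_{i,j}\alpha+\bigl(r_j+\textstyle\sum_{k\neq j}\rho_{j,k}\bigr)\beta+\gamma
=\lambda_{i,j}+r_j+\textstyle\sum_{k\neq j}\rho_{j,k}
=\nu_{i,j}+\textstyle\sum_{k\neq j}\rho_{j,k},
\end{equation*}
while the off-diagonal entry $\rho_{j,k}\alpha+n\eta$ (in the $i=1$ row) and $\rho_{j,k}\alpha$ (in the rows $i\ge 2$) both collapse to $\rho_{j,k}$ since $\eta=0$. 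Thus the two cases of Theorem 4.1 merge into the single uniform matrix $C_i$ exhibited in the corollary, valid for every $i=1,2,\ldots,n$.

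The only point that needs a brief verification is the hypothesis of Theorem 4.1 that the $Q(H_j)$'s share an orthonormal eigenbasis. This follows from Lemma 3.1: the matrices $A(H_1),\ldots,A(H_l)$ commute by assumption, hence admit a common orthonormal eigenbasis, which is then automatically a common orthonormal eigenbasis of $Q(H_j)=A(H_j)+r_jI$. Since no further computation beyond this substitution is required, there is no significant obstacle; the proof will be a one-line invocation of Theorem 4.1 together with the identity $\nu_{i,j}=\lambda_{i,j}+r_j$, entirely parallel to Corollary 3.2.
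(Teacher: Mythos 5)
Your proposal is correct and is exactly the paper's argument: the paper's proof of this corollary is the one-line specialization $(\alpha,\beta,\gamma,\eta)=(1,1,0,0)$ of Theorem \ref{theorem4.1}. Your additional verification that the diagonal entries reduce to $\nu_{i,j}+\sum_{k\neq j}\rho_{j,k}$ via $\nu_{i,j}=\lambda_{i,j}+r_j$ (so that the two cases $i=1$ and $i\ge 2$ merge into one uniform matrix) is a worthwhile detail the paper leaves implicit.
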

\begin{proof}
		By taking $(\alpha,\beta,\gamma,\eta)= (1,1,0,0)$ in Theorem \ref{theorem4.1},  the result is obtained immediately.
	\end{proof}
	


\begin{corollary}
	Suppose  $H_1,H_2,\ldots,H_l$ are  regular commuting graphs  such that  $H_j$ is an $r_j$ regular graph for $j = 1,2,\ldots,l$. Let $\omega_{i,j}$ be the Seidel matrix eigenvalues of $H_j,$ where $i = 1,2, \ldots,n,$ and  let $u_1,u_2,\ldots,u_n$ form an orthonormal set of eigenvectors for the Seidel matrix $S(H_1)$. Then, the  Seidel spectrum of   $ {\Lambda}_{H} (H_1,H_2, \ldots,H_l)$ consists of the spectrum of 
	 \begin{equation*}
	 	{C_1}=	
	 	\begin{pmatrix}
	 		-2 r_1-1+n & -2\rho_{1,2}+ n& \ldots&-2\rho_{1,(l-1)}+n &  -2\rho_{1,l}+ n\\
	 		-2\rho_{2,1}+n & 		-2 r_2-1+n& \ldots &-2\rho_{2,(l-1)}+n &  -2\rho_{2,l} + n\\
	 		\vdots& \vdots& \ddots& \vdots&\vdots\\
	 		
	 		-2\rho_{(l-1),1}+n& -2\rho_{(l-1),2}+n& \ldots &	-2 r_{(l-1)}-1+n& -2\rho_{(l-1),l}+n\\ 
	 		-2\rho_{l,1} + n & 	-2\rho_{l,2} + n & \ldots & -2\rho_{l,(l-1)}+n &	 	-2 r_l-1+n \\
	 	\end{pmatrix}
	 \end{equation*}
 and
	\begin{equation*}
		{C_i}=	\begin{pmatrix} \scriptsize
		\omega_{i,1}+(\sum_{\underset{j \neq 1}{j=1}} ^l \rho_{1,j}) & -2\rho_{1,2}&\ldots&-2\rho_{1,(l-1)}&-2\rho_{1,l}\\
			-2\rho_{2,1}& \omega_{i,2}+ (\sum_{\underset{j \neq 2}{j=1}} ^l \rho_{2,j})& \ldots&-2\rho_{2,(l-1)}&-2\rho_{2,l}\\
			\vdots&\vdots&\ddots&\vdots&\vdots\\
			-2\rho_{(l-1),1}&-2\rho_{(l-1),2}&\ldots&\omega_{i,(l-1)}+(\sum_{\underset{j \neq (l-1)}{j=1}} ^l \rho_{(l-1),j}) &-2\rho_{(l-1),l}\\
			-2\rho_{l,1}&-2\rho_{l,2}&\ldots&-2\rho_{l,(l-1)}&\omega_{i,l}+ (\sum_{\underset{j \neq l}{j=1}} ^l \rho_{l,j})
		\end{pmatrix}_{l \times l}
	\end{equation*}
	for $i =2,\ldots,n$, where
	\begin{equation*}
		\rho_{j,k} = \rho_{k,j} = \begin{cases}
			1 \text{~if~} jk \in E(H)\\
			0 \text{~otherwise}, 
		\end{cases}
		\text{~for~} 1 \leq j,k \leq l.
	\end{equation*} 
\end{corollary}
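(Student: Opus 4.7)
The plan is to follow exactly the template of Corollaries \ref{theorem3.2} and \ref{theorem3.3}: specialize Theorem \ref{theorem4.1} to the parameters $(\alpha,\beta,\gamma,\eta)=(-2,0,-1,1)$, which are precisely the values that realize the Seidel matrix $S(G)=J-I-2A(G)$ as a universal adjacency matrix. Under this choice the block formula for $U(\widehat G)$ written just before Theorem \ref{theorem4.1} becomes a block decomposition of $S(\widehat G)$, and the whole statement will be pulled out of Theorem \ref{theorem4.1} by a direct substitution.

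The first step is to verify the hypothesis of Theorem \ref{theorem4.1}. Since $I$ and $J$ commute with every symmetric matrix and $A(H_1),\ldots,A(H_l)$ commute pairwise by assumption, the Seidel matrices $S(H_j)=J-I-2A(H_j)$ commute pairwise as well. By Lemma \ref{lemma1}, the orthonormal eigenbasis $u_1,\ldots,u_n$ of $S(H_1)$ simultaneously diagonalizes every $A(H_j)$. Regularity of each $H_j$ lets me take $u_1=\tfrac{1}{\sqrt n}\,j_n$, the common Perron vector of the $A(H_j)$, so that $u_i\perp j_n$ and hence $Ju_i=0$ for every $i\ge 2$. The key identity is then $\omega_{i,j}=-1-2\lambda_{i,j}$ for $i\ge 2$, while $u_1$ produces the Perron Seidel eigenvalue $n-1-2r_j$.

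It then only remains to substitute $(\alpha,\beta,\gamma,\eta)=(-2,0,-1,1)$ into the two reduced matrices delivered by Theorem \ref{theorem4.1}. For $C_1$ the diagonal entries become $\alpha r_j+(r_j+\sum_{k\ne j}\rho_{j,k})\beta+\gamma+n\eta=n-2r_j-1$ and the off-diagonal entries become $\alpha\rho_{j,k}+n\eta=n-2\rho_{j,k}$, which yields the first displayed matrix. For $C_i$ with $i\ge 2$ the $\beta$ term drops out, the diagonal entries reduce via $-2\lambda_{i,j}-1=\omega_{i,j}$, and the off-diagonal entries are $\alpha\rho_{j,k}=-2\rho_{j,k}$, yielding the second displayed matrix. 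The argument is essentially bookkeeping; the one point that needs attention is that the $\eta J$ contribution produces the uniform $+n$ shift visible in $C_1$ but vanishes in every $C_i$ for $i\ge 2$ because $Ju_i=0$. This is exactly the $u_1$-vs-$u_i$ dichotomy already built into Theorem \ref{theorem4.1}, so no further work is required.
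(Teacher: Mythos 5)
Your proposal is correct and takes essentially the same route as the paper: the paper's own proof is the one-line substitution of $(\alpha,\beta,\gamma,\eta)=(-2,0,-1,1)$ into Theorem \ref{theorem4.1}, and you simply make the resulting bookkeeping explicit. (Your computation also correctly shows that for $i\ge 2$ the diagonal entries reduce to just $\omega_{i,j}$, since $\beta=0$ annihilates the $\sum_{k\ne j}\rho_{j,k}$ term that appears on the diagonal in the corollary as stated.)
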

\begin{proof}
	By taking $(\alpha,\beta,\gamma,\eta)=  ( -2,0,-1,1)$ in Theorem \ref{theorem4.1}, the result follows immediately.
\end{proof}



\end{document}